\documentclass[a4paper,11pt]{article}
\usepackage{amsthm,amsfonts,amssymb,amsmath}

\usepackage{bookmark}
\usepackage{comment}
\usepackage{calligra}
\usepackage{chngcntr}
\usepackage{enumitem}
\usepackage[a4paper]{geometry}
\usepackage{indentfirst}
\usepackage{mathtools}
\usepackage{mathrsfs}
\usepackage{lmodern}
\usepackage{microtype}
\usepackage{stmaryrd}
\usepackage{tikz}
\usepackage{tikz-3dplot}
\usepackage{tikz-cd}
\usepackage[nottoc]{tocbibind}
\usepackage{verbatim}

\usepackage[T1]{fontenc}

\usepackage{hyperref}

\usepackage[capitalise]{cleveref}
\hypersetup{
	linktoc = page,
	colorlinks = true,
	linkcolor = cyan, 
	urlcolor = [rgb]{1, 0.5, 0.5},
	citecolor = magenta 
}

\setcounter{tocdepth}{1}%
\setcounter{secnumdepth}{4}
\theoremstyle{plain}
\newtheorem{theorem}{Theorem}[section]

\newtheorem{lemma}[theorem]{Lemma}
\newtheorem{proposition}[theorem]{Proposition}
\newtheorem{conjecture}[theorem]{Conjecture}

\theoremstyle{definition}
\newtheorem{definition}[theorem]{Definition}
\newtheorem{remark}[theorem]{Remark}

\makeatletter
    \let\c@equation\c@theorem
\makeatother
\numberwithin{equation}{section}


\newcommand{\bb}[1]{\mathbb{#1}}
\newcommand{\rr}[1]{\mathrm{#1}}
\newcommand{\cc}[1]{\mathcal{#1}}


\newcommand{\quash}[1]{}



\AtEndDocument{%
  \par
  \medskip
  \begin{tabular}{@{}l@{}}%
    \textsc{Pengfei Huang}\\
    \textsc{School of Mathematics, Nanjing University},\\
	\textsc{Nanjing 210093, China}\\
    \textit{E-mail address}: \texttt{pfhwangmath@gmail.com}\vspace{4pt} \\

    \textsc{Yichen Qin}\\
    \textsc{School of Mathematical Sciences, Fudan University} \\
    \textsc{Handan Road 220, 200437 Shanghai China}\\
    \textit{E-mail address}: \texttt{yichenqin@fudan.edu.cn} \vspace{4pt} \\

    \textsc{Hao Sun}\\
    \textsc{Department of Mathematics, South China University of Technology}\\
    \textsc{Guangzhou, 510641, China} \\
    \textit{E-mail address}: \texttt{hsun71275@scut.edu.cn} \\
  \end{tabular}}

\title{Rigid $G$-connections and nilpotency of $p$-curvatures}
\author{Pengfei Huang, Yichen Qin, Hao Sun
}


\date{ }

\begin{document}
\maketitle
\begin{abstract}
Motivated by Simpson's conjecture on the motivicity of rigid irreducible connections, Esnault and Groechenig demonstrated that the mod-$p$ reductions of such connections on smooth projective varieties have nilpotent $p$-curvatures. In this paper, we extend their result to integrable $G$-connections on curves.
\end{abstract}
\tableofcontents

\renewcommand{\thefootnote}{\fnsymbol{footnote}}
\footnotetext[1]{Key words: nonabelian Hodge correspondence, rigidity, integrable connection, Higgs bundle, $p$-curvature}
\footnotetext[2]{MSC2020:  14D20, 14D23, 14J60}

\section{Introduction}

Let $X$ be a smooth complex projective variety. The celebrated Riemann--Hilbert correspondence gives rise to an analytic isomorphism $\mathcal{M}_{\mathrm{dR}}(X,n) \cong \mathcal{M}_{\mathrm{B}}(X,n)$, where $\mathcal{M}_{\mathrm{dR}}(X,n)$ is the moduli space of integrable connections of rank $n$ on $X$ and $\mathcal{M}_{\mathrm{B}}(X,n)$ is the moduli space of complex local systems of rank $n$ on $X$ \cite{Sim94b}. When $X$ has higher dimension, the moduli spaces $\mathcal{M}_{\mathrm{dR}}(X,n)$ and $\mathcal{M}_{\mathrm{B}}(X,n)$ may have isolated points, whose associated integrable connections (or local systems)  are called \textit{rigid}. Following the nonabelian Hodge correspondence, any rigid integrable connection underlies a complex variation of Hodge structure. Furthermore, it is a complex direct factor of a $\mathbb{Q}$-variation of Hodge structure \cite[Theorem 5]{Sim92}. This leads to the notion of motivicity from a geometric aspect.

More precisely, an integrable connection $(E,\nabla)$ is \textit{motivic} (or \textit{of geometric origin}) if there exists a dense open subvariety $U\subset X$ and a smooth projective morphism $f: Z\to U$ such that $(E,\nabla)$ is a direct summand of $R^if_*\mathcal{O}_Z$ equipped with the Gauss--Manin connection.

Returning to the Riemann--Hilbert correspondence, notice that taking monodromy representations for integrable connections involves exponentiation, which is transcendental. Simpson addressed this transcendental nature in \cite{Sim90}, where he posed a question about which integrable connections defined over $\overline{\mathbb{Q}}$ have associated monodromy representations also defined over $\overline{\mathbb{Q}}$. He conjectured that such integrable connections should originate from geometry, namely the following conjecture:

\begin{conjecture}[Simpson's standard conjecture, \cite{Sim90}]\label{Sim_standard}
    By spreading out,  the $\overline{\mathbb{Q}}$-points in the intersection $\mathcal{M}_{\mathrm{B}}(X,n)(\overline{\mathbb{Q}})\bigcap \mathcal{M}_{\mathrm{dR}}(X,n)(\overline{\mathbb{Q}})$ are motivic.
\end{conjecture}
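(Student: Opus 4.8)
The plan is to treat Conjecture~\ref{Sim_standard} as the organizing target and to attack it by accumulating, and then attempting to synthesize, the structural consequences that motivicity would force on a rigid integrable connection. Concretely, suppose $(E,\nabla)$ is a rigid integrable connection on $X$ defined over $\overline{\mathbb{Q}}$ whose monodromy is also defined over $\overline{\mathbb{Q}}$. By the nonabelian Hodge correspondence together with \cite[Theorem 5]{Sim92}, it already underlies a complex variation of Hodge structure and is a complex direct factor of a $\mathbb{Q}$-variation of Hodge structure; in particular its local monodromies are quasi-unipotent and its Hodge numbers are constrained. The first step is therefore to promote these Hodge-theoretic shadows to arithmetic ones: spreading out $(E,\nabla)$ over a finitely generated $\mathbb{Z}$-algebra, one shows that the family is \emph{integral} --- up to a finite \'etale pullback and a twist it descends to an $\overline{\mathbb{Z}}$-local system --- and that for almost all primes $p$ the mod-$p$ reduction $(E_p,\nabla_p)$ has nilpotent $p$-curvature. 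These are precisely the inputs supplied by Esnault--Groechenig in the $\mathrm{GL}_n$-case, and the nilpotency statement is exactly what the present paper extends to integrable $G$-connections.

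Given these inputs, the second step is to build the $\ell$-adic and crystalline incarnations. From the mod-$p$ reductions with nilpotent $p$-curvature one expects, by Katz's theory and its refinements, an $F$-isocrystal, hence --- via the theory of companions of L.~Lafforgue and Drinfeld, combined with $p$-adic comparison --- a compatible system $\{\rho_\ell\}$ of $\ell$-adic representations of $\pi_1(U)$ whose Hodge--Tate weights are dictated by the VHS. The third step, the genuinely hard one, is to recognize this compatible system as \emph{motivic}, i.e.\ to produce an explicit smooth projective morphism $f\colon Z\to U$ realizing $(E,\nabla)$ inside $R^if_*\mathcal{O}_Z$. In the curve case one can hope to be fully effective, invoking Katz's middle-convolution algorithm, which exhibits every cohomologically rigid local system on $\mathbb{P}^1\setminus\{\text{points}\}$ as a summand of the cohomology of an explicit family of generalized hypergeometric-type varieties; in higher dimension one would attempt a Lefschetz-type reduction, restricting to a generic complete intersection curve where the local system becomes an iterated middle convolution, and then gluing the resulting families. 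This gluing, and the recognition problem it rests on, is where I expect the argument to stall without substantial new geometric input.

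Consequently, the main obstacle is structural rather than computational: the conjecture is, as stated, open, and what one can honestly deliver at present is the verification of necessary conditions --- quasi-unipotent monodromy, a $\mathbb{Z}$-VHS structure, integrality, nilpotency of $p$-curvature, and existence of crystalline and $\ell$-adic companions --- together with the curve case and the rank-$2$ case due to Corlette--Simpson. Turning these shadows into a motive amounts, in essence, to an effective form of the Fontaine--Mazur and Hodge/Tate philosophies, namely recovering a geometric object from its realizations, and this is out of reach by current methods in dimension greater than one and rank greater than two. The role of the present paper in this program is thus to add the nilpotency-of-$p$-curvature clause to the list of verified predictions, now in the generality of $G$-connections, widening the body of evidence for the conjecture while leaving its resolution to future work.
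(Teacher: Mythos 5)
The statement you were asked about is a \emph{conjecture}, not a theorem of the paper: the paper offers no proof of Conjecture~\ref{Sim_standard} and only cites it as motivation for the main result on nilpotency of $p$-curvatures. Your write-up correctly recognizes this --- you do not claim a proof, and your assessment that the conjecture is open and that current methods only deliver necessary conditions (quasi-unipotence, underlying a VHS, integrality, nilpotent $p$-curvature, companions) plus the low-rank and one-dimensional cases is accurate and consistent with how the paper positions itself. So there is nothing to ``fix'': the honest answer here is that no proof exists, and you gave it.

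Two small calibration points. First, you slide between Conjecture~\ref{Sim_standard} (the standard conjecture, about $\overline{\mathbb{Q}}$-points of $\mathcal{M}_{\mathrm{B}}\cap\mathcal{M}_{\mathrm{dR}}$) and Conjecture~\ref{Sim_motivic} (the motivicity conjecture, about rigid connections); the known cases you list --- Katz's middle convolution on punctured $\mathbb{P}^1$, Corlette--Simpson in rank $2$, Langer--Simpson in rank $3$ --- are results toward the \emph{motivicity} conjecture, i.e.\ the rigid case, not the full standard conjecture, whose $\overline{\mathbb{Q}}$-points need not be isolated. Second, the present paper's contribution is precisely the step you label as ``supplied by Esnault--Groechenig,'' extended from $\mathrm{GL}_n$ to reductive $G$ via the spectral data morphism of Chen--Ng\^o and the Ogus--Vologodsky correspondence for principal bundles; it is evidence for Conjecture~\ref{Sim_motivic}, and only indirectly for Conjecture~\ref{Sim_standard}.
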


As we have seen, each rigid integrable connection corresponds to an irreducible component of $\mathcal{M}_{\mathrm{dR}}(X,n)$. Therefore, if $X$ is defined over $\overline{\mathbb{Q}}$, each of these zero-dimensional components is also defined over $\overline{\mathbb{Q}}$. This leads to a weaker version of the standard conjecture, stated as follows:

\begin{conjecture}[Simpson's motivicity conjecture, \cite{Sim92}]\label{Sim_motivic}
    Rigid integrable connections are motivic.
\end{conjecture}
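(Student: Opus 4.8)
The plan is to treat Conjecture~\ref{Sim_motivic} as the terminal step of a program that first assembles the three structures a motivic connection must carry --- Hodge-theoretic, arithmetic, and $p$-adic/crystalline --- and then feeds the resulting rigid datum into a geometric-realization machine. The Hodge input is already in hand: by \cite[Theorem 5]{Sim92} a rigid integrable connection $(E,\nabla)$ underlies a complex variation of Hodge structure and is a complex direct factor of a $\mathbb{Q}$-variation of Hodge structure, which is exactly the weight/polarization data a motive would induce. First I would use that the corresponding point of $\mathcal{M}_{\mathrm{dR}}(X,n)$ is isolated, so the finite set of rigid points is stable under $\mathrm{Gal}(\overline{\mathbb{Q}}/\mathbb{Q})$; hence $(E,\nabla)$, together with $X$, is defined over a number field $F$ and spreads out to a model over $\mathcal{O}_F$ after inverting finitely many primes. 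This spreading-out is what allows the analytic Hodge structure to interact with arithmetic.

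Second, over the spread-out model I would build a compatible system of companions. Reduction modulo a prime $\mathfrak{p}$ yields a flat connection in characteristic $p$ whose $p$-curvature, by the nilpotency theorem extended in this paper (and by Esnault--Groechenig in the classical case), is nilpotent; this is precisely the local condition needed for a Frobenius structure and hence for a crystalline/overconvergent interpretation. Applying the companion formalism of Drinfeld--Lafforgue, together with its crystalline refinements, to the associated $\ell$-adic local systems, I would assemble a compatible system $\{E_\lambda\}_\lambda$ whose Frobenius traces lie in a fixed number field and whose Hodge--Tate weights match the Hodge filtration of $(E,\nabla)$. Cohomological rigidity is the engine forcing all companions to exist and to be pure of the expected weight.

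The decisive step is geometric realization: one must produce a dense open $U\subset X$ and a smooth projective family $f\colon Z\to U$ such that $(E,\nabla)$ is a direct summand of $R^i f_*\mathcal{O}_Z$ with its Gauss--Manin connection. Here I would split into two regimes. In low rank --- rank $2$ on quasi-projective $X$ --- the Corlette--Simpson classification realizes the system as a pullback of a uniformizing variation on an orbicurve, producing the family directly. In higher rank I would route $\{E_\lambda\}_\lambda$ through the Langlands correspondence: its expected automorphy attaches an automorphic representation, and potential automorphy together with the construction of motives in the cohomology of Shimura or Kuga--Sato type families would supply $Z\to U$.

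I expect this last step to be the genuine obstacle, which is why the statement remains a conjecture rather than a theorem. Turning a cohomologically rigid, Frobenius-compatible, Hodge-theoretically motivic-looking system into an actual algebraic family is exactly the Fontaine--Mazur-type implication that a crystalline compatible system is of geometric origin, and this is open beyond special cases. Steps~1--2 can be secured unconditionally for rigid connections --- and the nilpotency result of this paper is precisely what makes the $p$-adic half of Step~2 available --- but Step~3 currently rests either on the low-rank uniformization or on unproved instances of automorphy, so the program delivers Conjecture~\ref{Sim_motivic} only conditionally on these inputs.
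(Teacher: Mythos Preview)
The paper does not prove Conjecture~\ref{Sim_motivic}. It is stated purely as motivation: the paper records it as Simpson's open conjecture, lists the known special cases (Katz on $\mathbb{P}^1$ minus points, Corlette--Simpson in rank~$2$, Langer--Simpson in rank~$3$), and then proves instead the much weaker but unconditional Theorem~\ref{thm:main} on nilpotency of $p$-curvatures as evidence toward the conjecture. There is therefore no ``paper's own proof'' to compare your attempt against.

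Your proposal is not a proof either, and to your credit you say so explicitly. What you have written is an outline of the folklore program --- spread out over a number field, assemble a compatible system with crystalline companions, then invoke a Fontaine--Mazur/automorphy-type geometric realization --- together with an honest admission that the last step is exactly the open problem. That assessment is accurate: Steps~1--2 of your outline are essentially the content of Esnault--Groechenig's work and its extension in this paper, while Step~3 is the conjecture itself restated in companion language. So your write-up correctly locates the gap, but it should not be labeled a proof proposal for Conjecture~\ref{Sim_motivic}; it is a survey of why the conjecture is plausible and what remains to be done.
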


The motivicity conjecture \ref{Sim_motivic} is known for local systems on $\mathbb{P}^1$ with finitely many punctures by Katz \cite{Kat96}, rank $2$ and rank $3$ local systems on quasi-projective varieties from the work of Corlette--Simpson \cite{CS08} and Langer--Simpson \cite{LS18} respectively. We also refer the reader to \cite{Esn23a,Esn23b,Sim90} for more details about the background and recent progress.

To provide general evidence for the motivicity conjecture, it is useful to examine whether rigid connections exhibit similar properties to those of motivic connections. Here, the work of Esnault and Groechenig stands out as deeply inspirational. For example, they proved that cohomologically rigid connections are integral \cite{EG18}. Moreover, since Gauss--Manin connections in characteristic $p$ have nilpotent $p$-curvatures \cite{Katz72}, Esnault and Groechenig also confirmed the nilpotency of $p$-curvatures of mod-$p$ reductions of rigid connections \cite{EG20}.

Progress has also been made on analogous problems for $G$-connections. Specifically, Klevdal and Patrikis proved the integrality of cohomologically rigid $G$-connections \cite{CP22}, and F{\ae}rgeman established the motivicity conjecture for $G$-connections on curves by demonstrating that rigid $G$-connections on curves are Hecke eigensheaves \cite{Fae24}.

\hfill

In this paper, we aim to generalize Esnault--Groechenig's result on nilpotency of $p$-curvatures \cite[Theorem 1.4]{EG20} to rigid integrable $G$-connections on curves. Although this result can be implied by results in \cite{Fae24}, we provide a new proof and give some evidence that this new strategy can be generalized to higher dimensional case in \cref{sect_higher_dim}. Here is the main result:

\begin{theorem}\label{thm:main}
  Let $X$ be a connected complex smooth projective curve, $G$ a connected complex reductive group, and $(E,\nabla)$ a rigid integrable $G$-connection. Then there is a scheme $S$ of finite type over $\bb{Z}$ over which $(X,(E,\nabla))$ has a model $(X_S, (E_S,\nabla_S))$ such that for all closed points $s\in |S|$, the restrictions $(E_s,\nabla_s)$ have nilpotent $p$-curvatures.
\end{theorem}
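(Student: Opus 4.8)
The plan is to reduce the statement for $G$-connections to the known case of $\mathrm{GL}_n$-connections established by Esnault--Groechenig, by exploiting the Tannakian philosophy: a $G$-connection is a tensor functor from $\mathrm{Rep}(G)$ to the category of integrable connections on $X$, and nilpotency of $p$-curvature is something one can test on a faithful representation. So the first step is to fix a faithful representation $\rho: G \hookrightarrow \mathrm{GL}_n$ and consider the associated integrable connection $(E_\rho, \nabla_\rho) := \rho_*(E,\nabla)$ of rank $n$. The key reduction is: the $p$-curvature $\psi_p(\nabla)$ of the $G$-connection lives in $\mathrm{H}^0(X_s, \mathrm{ad}(E_s) \otimes F_s^* \Omega^1_{X_s})$ (a section valued in the adjoint bundle, twisted by Frobenius pullback of differentials), and under $\rho$ it maps to the usual $p$-curvature of $(E_{\rho,s}, \nabla_{\rho,s})$; since $d\rho: \mathfrak{g} \to \mathfrak{gl}_n$ is injective and sends nilpotent elements to nilpotent elements and, crucially, reflects nilpotency (an element $X \in \mathfrak{g}$ with $d\rho(X)$ nilpotent is itself nilpotent, because nilpotent elements of $\mathfrak g$ are exactly those in the nilpotent cone, detected by vanishing of all $G$-invariant polynomials, which are restrictions of $\mathrm{GL}_n$-invariant ones up to the standard caveats about reductive subgroups), the $p$-curvature of the $G$-connection is nilpotent if and only if that of $(E_\rho,\nabla_\rho)$ is. Here one should be slightly careful: the cleanest formulation is that $\psi_p$ is a nilpotent section of $\mathrm{ad}(E_s)$ in the sense that its image in the associated graded/coarse quotient vanishes, equivalently it lies in the nilpotent cone fiberwise; this is a closed condition and is faithfully detected by any faithful $\rho$ up to passing to a further finite extension, which only enlarges $S$.

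The second step is to verify that rigidity is preserved under $\rho_*$, so that Esnault--Groechenig's theorem applies to $(E_\rho, \nabla_\rho)$. This is where I expect the main obstacle. Rigidity of a $G$-connection means it is an isolated point (or a rigid point) in the moduli stack/space of integrable $G$-connections $\mathcal{M}_{\mathrm{dR}}(X, G)$, equivalently $\mathrm{H}^1$ of the adjoint complex $\mathrm{DR}(\mathrm{ad}(E,\nabla))$ vanishes (cohomological rigidity) or the corresponding deformation-theoretic statement. Pushing forward along $\rho$, the obstruction is that $(E_\rho, \nabla_\rho)$ need not be rigid as a $\mathrm{GL}_n$-connection: its deformations as a rank-$n$ connection form a larger space than its deformations as a $G$-connection, because $\mathfrak{gl}_n = \mathfrak{g} \oplus \mathfrak{g}^\perp$ as a $G$-module and the extra summand $\mathfrak{g}^\perp$ can contribute nontrivial $\mathrm{H}^1$. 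So the naive reduction fails, and this must be addressed. I would handle this exactly as in Klevdal--Patrikis's treatment of integrality (cited here as \cite{CP22}) and as Esnault--Groechenig themselves do in related settings: instead of using cohomological rigidity directly, use the fact that a rigid $G$-connection underlies a $G$-variation of Hodge structure (the $G$-analogue of Simpson's theorem, which for rigid $G$-local systems says the point is fixed by the $\mathbb{C}^\times$-action on $\mathcal{M}_{\mathrm{dR}}(X,G)$ coming from the nonabelian Hodge correspondence, hence is a $G$-VHS / a fixed point of the Hodge action). Then $(E_\rho, \nabla_\rho) = \rho_*(E,\nabla)$ underlies a complex variation of Hodge structure as well, since $\rho$ is a homomorphism of reductive groups and composing a $G$-VHS with $\rho$ gives a $\mathrm{GL}_n$-VHS.

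The third step is then to invoke the actual mechanism of \cite{EG20}, which does not really need full rigidity of the rank-$n$ object but rather: (a) the connection is defined over $\overline{\mathbb{Q}}$ and integral (bounded denominators, so a model over a finite-type $\mathbb{Z}$-scheme $S$ exists — for $G$-connections this is \cite{CP22}), and (b) it underlies a $\mathbb{Z}$-VHS after spreading out, or more precisely one runs the Hodge-theoretic/crystalline argument: the Fontaine--Laffaille / Ogus--Vologodsky $p$-adic nonabelian Hodge theory shows that mod-$p$ reductions of connections coming from a VHS with small Hodge numbers relative to $p$ correspond to Higgs bundles under the Cartier transform, and nilpotency of $p$-curvature is equivalent to the connection being in the image of the inverse Cartier transform applied to a nilpotent Higgs field — which holds because the Higgs field of a graded VHS is nilpotent (it shifts the Hodge filtration by one step). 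Concretely, I would argue: after spreading out $(E, \nabla)$ as a $G$-connection over $S$ and choosing $s$ with residue characteristic $p \gg 0$, the $G$-Higgs bundle $(E_{\mathrm{Higgs}}, \theta)$ associated by nonabelian Hodge theory to the $G$-VHS structure has $\theta$ nilpotent (it is the grading-shift map $\mathrm{gr}\,\mathrm{ad}(E) \to \mathrm{gr}\,\mathrm{ad}(E)\otimes \Omega^1$ of degree $-1$), so its pushforward along $\rho$ is a nilpotent Higgs bundle; by \cite{EG20} (which is the content of their argument, using that a rigid connection reduces mod $p$ to the image under inverse Cartier of its Higgs reduction because both are rigid fixed points and the $p$-adic Riemann--Hilbert / Cartier correspondence matches them) the mod-$p$ connection $(E_{\rho,s}, \nabla_{\rho,s})$ is $F$-divided / periodic and has nilpotent $p$-curvature. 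Pulling this back through the nilpotency-reflecting property of $d\rho$ from Step 1 gives nilpotency of the $p$-curvature of the $G$-connection $(E_s, \nabla_s)$ itself. I would flag that the delicate points are: making "rigid $G$-connection underlies a $G$-VHS" precise and citing the right source, controlling the finitely many bad primes (the $S$ in the statement), and the compatibility of the Tannakian formalism with all the characteristic-$p$ functors (Cartier transform, $p$-curvature) — these compatibilities are functorial in the group and hence formal, but need to be stated carefully.
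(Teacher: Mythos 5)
Your Step 1 is fine: for reductive $G$ a faithful representation $\rho\colon G\hookrightarrow \mathrm{GL}_n$ preserves Jordan decompositions, so nilpotency of the $p$-curvature of the $G$-connection is indeed equivalent to nilpotency of the $p$-curvature of $\rho_*(E,\nabla)$. You also correctly identified the real obstacle, namely that $\rho_*(E,\nabla)$ is in general not rigid as a $\mathrm{GL}_n$-connection. The gap is in your proposed workaround. The mechanism of \cite{EG20} genuinely needs rigidity of the rank-$n$ object, not merely integrality plus an underlying VHS: their proof is a counting argument that matches the finite set of rigid connections on $X_s$ with the finite set of rigid (hence nilpotent) Higgs bundles on $X'_s$ under the positive-characteristic Cartier correspondence, and concludes that every \emph{rigid} connection mod $p$ lies in the image of the inverse Cartier transform of a nilpotent Higgs bundle. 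Without rigidity there is no way to identify the mod-$p$ reduction of a given connection with the inverse Cartier transform of anything: the complex Corlette--Simpson correspondence is transcendental and does not commute with reduction mod $p$, so the fact that $(E_\rho,\nabla_\rho)$ underlies a VHS with nilpotent Higgs field over $\mathbb{C}$ gives no control over the $p$-curvature of its reduction. The assertion that an integral connection underlying a VHS has nilpotent $p$-curvature for almost all $p$ is essentially open in general (it would follow from motivicity, which is exactly what one is trying to give evidence for); rigidity is the substitute for the missing Frobenius/crystalline structure, and it is lost under $\rho_*$.

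This is why the paper does not reduce to $\mathrm{GL}_n$ but instead re-runs the entire Esnault--Groechenig argument $G$-equivariantly: the Hitchin map is replaced by the Chen--Ng\^o spectral data morphism ${\rm sd}\colon {\rm HIG}(X,G)\to\mathscr{B}(X,G)$, whose compatibility with the $\mathbb{G}_m$-action yields nilpotency of rigid $G$-Higgs fields (Lemma \ref{lem:rigid-higgs-nilpotent}); the Ogus--Vologodsky correspondence is replaced by its principal-bundle version (Theorem \ref{thm_main_SSW24}), which preserves $R$-stability and rigidity (Lemma \ref{lem_frobenius_pullback_rigid}); and the counting argument $n_{\rm dR}^{\rm nilp}(X_s,G)=n_{\rm Dol}(X'_s,G)=n_{\rm dR}(X_s,G)$ is carried out directly for $G$-objects. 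A faithful representation $G\hookrightarrow{\rm SL}_n$ does appear in the paper, but only for the auxiliary purpose of bounding the nilpotency exponent by $p-1$ (Proposition \ref{prop_OV_corr_rigid}), not to transport rigidity. To repair your proof you would need either to establish the VHS-implies-nilpotent-$p$-curvature statement for non-rigid integral connections (not currently available) or to run the rigidity and counting arguments at the level of $G$-objects, which is what the paper does.
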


The proof of Theorem \ref{thm:main} follows a similar strategy to that used for \cite[Theorem 1.4]{EG20}. Below is a summary of this approach for integrable $G$-connections.
\begin{center}
\begin{tikzcd}
    \text{rigid integrable $G$-connections on $X$} \arrow[rr, "(1)"] \arrow[dd, "(3)"] & & \text{rigid $G$-Higgs bundles on $X$} \arrow[d,"(2)"]\\
    & & \text{rigid nilpotent $G$-Higgs bundles on $X$} \arrow[d, "(3)"] \\
    \text{rigid integrable $G$-connections on $X_s$} & & \text{rigid nilpotent $G$-Higgs bundles on $X_s$} \arrow[ll, "(4)"]
\end{tikzcd}
\end{center}
\begin{enumerate}
    \item[(1)] By Corlette--Simpson correspondence (Theorem \ref{thm_CS_G}), rigid integrable $G$-connections correspond to rigid $G$-Higgs bundles.
    \item[(2)] Rigid $G$-Higgs bundles are actually nilpotent (Lemma \ref{lem:rigid-higgs-nilpotent}).
    \item[(3)] The approach to find such an arithmetic model is the same as \cite[Lemma 3.1 and Proposition 3.3]{EG20}.
    \item[(4)] The nonabelian Hodge correspondence in positive characteristic for principal bundles (Theorem \ref{thm_main_SSW24}) preserves rigidity (Lemma \ref{lem_frobenius_pullback_rigid}), and thus the corresponding rigid integrable $G$-connections on $X_s$ have nilpotent $p$-curvatures.
\end{enumerate}
Furthermore, Esnault and Groechenig constructed a Frobenius pullback map for the stack of de Rham local systems over the Witt vector ring in their recent work \cite{EG25}. This new strategy provides a new proof of the nilpotency of the $p$-curvatures of rigid flat connections \cite[Corollary 1.3]{EG25}. Although we will not discuss this new approach in this paper, it seems that the main result (\cref{thm:main}) can be proved in a similar way by discussing the Frobenius pullback map for the stack of de Rham $G$-local systems over the Witt vector ring.

\subsection*{Acknowledgement} The authors thank Raju Krishnamoorthy for his lecture notes from a seminar on Esnault--Groechenig's work at Humboldt-Universität zu Berlin and for valuable discussions. The authors also express their gratitude to Mao Sheng and Jianping Wang for numerous insightful discussions on exponential maps and the nonabelian Hodge correspondence in positive characteristic. The author would like to thank the anonymous referee to point out a gap about the horizontal property of Hitchin morphisms on higher dimensional varieties.

Huang acknowledges fundings from the European Research Council (ERC) under the European Union's Horizon 2020 research and innovation program (grant agreement No. 101018839) and Deutsche Forschungsgemeinschaft (DFG, Projektnummer 547382045). Qin is supported by the European Research Council (ERC) under the European Union's Horizon 2020 research and innovation program (grant agreement no. 101020009, project TameHodge). Sun is
partially supported by Guangdong Basic and Applied Basic
Research Foundation (No. 2024A1515011583).

\section{Recollections on nonabelian Hodge correspondence}

In this section, we briefly review the nonabelian Hodge correspondence in mixed characteristics. Although we mainly focus on the case of curves in this paper, some results hold for smooth projective varieties in arbitrary dimension.

\subsection{\texorpdfstring{$G$}{G}-Higgs bundles and flat \texorpdfstring{$G$}{G}-bundles}\label{subsect_Higgs_conn}

Since we consider both $G$-Higgs bundles and flat $G$-bundles in mixed characteristic, the following definitions of $G$-Higgs bundles and flat $G$-bundles are given over a perfect field $k$. Let $X$ be a smooth projective algebraic curve over $k$. In the case of positive characteristic, we have the following diagram
\begin{center}
\begin{tikzcd}
X \arrow[rr, "F" description] \arrow[rrd] \arrow[rrrr, bend left, "F_X" description] & & X' \arrow[d] \arrow[rr, "\pi_{X/k}" description] & & X \arrow[d]\\
& & {\rm Spec} \, k \arrow[rr, "F_k" description] & & {\rm Spec} \, k
\end{tikzcd}
\end{center}
where
\begin{itemize}
    \item $F_X$ and $F_k$ are the absolute Frobenius morphisms;
    \item $X'$ is the pullback of $X$ along $F_k$;
    \item $F: X \rightarrow X'$ is the relative Frobenius morphism.
\end{itemize}
Let $\Omega_X$ denote the cotangent sheaf and $G$ be a connected split reductive group $G$ over $k$ with Lie algebra $\mathfrak{g}$. Given a $G$-bundle $V$ on $X$, we denote by $V(\mathfrak{g}):= V \times_G \mathfrak{g}$ the adjoint bundle.

\subsubsection{\texorpdfstring{$G$}{G}-Higgs bundles}

\begin{definition}
A \emph{$G$-Higgs bundle} on $X$ is a pair $(V,\theta)$, where $V$ is a $G$-bundle and $\theta \in H^0(X, V(\mathfrak{g}) \otimes \Omega_X)$ is a section satisfying the integrability condition $\theta \wedge \theta =0$. Such a section $\theta$ is called a \emph{Higgs field}.
\end{definition}

Let $(V,\theta)$ be a $G$-Higgs bundle. Taking a parabolic subgroup $P \subseteq G$ and a reduction of structure group $\sigma: X \rightarrow V/P$, we define the product $V_\sigma$ via the following diagram
\begin{center}
\begin{tikzcd}
V_\sigma \arrow[rr, dotted] \arrow[d, dotted] & & V \arrow[d] \\
X \arrow[rr, "\sigma"] & & V/P \, .
\end{tikzcd}
\end{center}
Clearly, $V_\sigma$ is a $P$-bundle on $X$. A reduction of structure group $\sigma$ is \emph{compatible} with the Higgs field $\theta$ if there is a lifting
\begin{center}
\begin{tikzcd}
& & V_\sigma(\mathfrak{p}) \otimes \Omega_X \arrow[d] \\
X \arrow[rr, "\theta"'] \arrow[urr, "\theta_\sigma", dotted] & & V(\mathfrak{g}) \otimes \Omega_X \, ,
\end{tikzcd}
\end{center}
where $V_\sigma(\mathfrak{p})$ is the adjoint bundle of $V_\sigma$. Taking a character $\chi: P \rightarrow \mathbb{G}_m$, we obtain a $\mathbb{G}_m$-bundle $\chi_*(V_\sigma)$ and thus a line bundle on $X$. Now, we will give the $R$-stability condition on $G$-Higgs bundles, which is equivalent to the slope stability condition. We refer the reader to \cite{Ram75,Ram96a,Ram96b} for Ramanathan's original definition for principal bundles and \cite{KSZ24} for parahoric objects.

\begin{definition}\label{defn_stab_Higgs}
A $G$-Higgs bundle $(V,\theta)$ is \emph{$R$-semistable} (resp. \emph{$R$-stable}) if
\begin{itemize}
    \item for any proper parabolic subgroup $P \subseteq G$;
    \item for any $\theta$-compatible reduction of structure group $X \rightarrow V/P$;
    \item for any anti-dominant character $\chi: P \rightarrow \mathbb{G}_m$ acting trivially on the center of $P$,
\end{itemize}
we have
\begin{align*}
    \deg \chi_*(V_\sigma) \geq 0 \, (\text{resp.} \, >) \, .
\end{align*}
\end{definition}

\subsubsection{Flat $G$-bundles}

Now we move to flat $G$-bundles and refer the reader to \cite[Appendix]{CZ15} for more details.

\begin{definition}
Let $E$ be a $G$-bundle. An \emph{integrable $G$-connection $\nabla$} on $E$ is an integrable connection $\nabla: \mathcal{O}_E \rightarrow \mathcal{O}_E \otimes \Omega_X$ compatible with the $G$-action on $E$. Equivalently, an \emph{integrable $G$-connection} is a section $T_X \rightarrow {\rm At}(E)$ of Lie algebroids of the following short exact sequence
\begin{align*}
    0 \rightarrow E(\mathfrak{g}) \rightarrow {\rm At}(E) \rightarrow T_X \rightarrow 0,
\end{align*}
where ${\rm At}(E)$ is the Atiyah Lie algebroid of $E$. A \emph{flat $G$-bundle} is a pair $(E,\nabla)$, where $E$ is a $G$-bundle and $\nabla$ is an integrable $G$-connection. Sometimes, such a pair $(E,\nabla)$ is called an \emph{integrable $G$-connection} for convenience. Denote by
\begin{align*}
    \psi:=\psi_{\nabla}: F^* T_{X'} \rightarrow E(\mathfrak{g})
\end{align*}
the \textit{$p$-curvature} associated to $\nabla$, which is adjoint to the map
\begin{align*}
    T_X \rightarrow E(\mathfrak{g}), \quad v \mapsto \nabla(v)^p - \nabla(v^{[p]}).
\end{align*}
\end{definition}

Let $(E,\nabla)$ be an integrable $G$-connection. Given a reduction of structure group $\varsigma: X \rightarrow E/P$, we have a map $E_\varsigma \rightarrow E$. The reduction of structure group $\varsigma$ is \emph{$\nabla$-compatible} if the connection $\nabla$ induces an integrable connection $\nabla_\varsigma: \mathcal{O}_{V_\varsigma} \rightarrow \mathcal{O}_{V_\varsigma} \otimes \Omega_{X}$ such that the diagram commutes
\begin{center}
	\begin{tikzcd}
		\mathcal{O}_V \arrow[r,"\nabla"] \arrow[d] & \mathcal{O}_V \otimes \Omega_{X} \arrow[d]  \\
		\mathcal{O}_{V_\varsigma} \arrow[r,"\nabla_\varsigma", dotted] & \mathcal{O}_{V_\varsigma} \otimes \Omega_{X} \, .
	\end{tikzcd}
\end{center}

\begin{definition}\label{defn_stab_conn}
An integrable $G$-connection $(E,\nabla)$ is \emph{$R$-semistable} (resp. \emph{$R$-stable}), if
\begin{itemize}
\item for any proper parabolic subgroup $P \subseteq G$;
\item for any $\nabla$-compatible reduction of structure group $\varsigma: X \rightarrow E/P$;
\item for any antidominant character $\chi: P \rightarrow \mathbb{G}_m$ acting trivially on the center of $P$,
\end{itemize}
we have
\begin{align*}
	\deg \chi_* (V_{\varsigma}) \geq 0 \text{ (resp. $> 0$)}.
\end{align*}
\end{definition}

\subsubsection{Nilpotency}\label{subsect_nil}

We start with the case of characteristic zero. Let $k$ be a field of characteristic zero and let $R$ be a finitely generated $k$-algebra. We introduce the following definition of nilpotency.

\begin{definition}\label{defn_nil0}
    An element $x \in \mathfrak{g}(R)$ is \emph{nilpotent of exponent $\leq n-1$} if it lies in a nilpotent Lie subalgebra of nilpotency class $\leq n-1$. A collection of elements $\{x_i\}_{i \in I} \subseteq \mathfrak{g}(R)$, where $I$ is an index set, is \emph{nilpotent of exponent $\leq n-1$} if they are included in a nilpotent Lie subalgebra of nilpotency class $\leq n-1$.
\end{definition}

If $x \in \mathfrak{g}(R)$ is nilpotent, it corresponds to an element in $G(R)$ via the exponential map. We briefly review this well-known fact and refer the reader to \cite[Chapitre II. and Chapitre IV.]{DG70} for more details and relevant properties of the exponential map. There is an exact sequence
\begin{align*}
    0 \rightarrow \mathfrak{g}(R) \rightarrow G( R[\varepsilon] / (\varepsilon^2) ) \rightarrow G(R) \rightarrow 0.
\end{align*}
For each element $x \in \mathfrak{g}(R)$, there exists a unique element in $G(R[[T]])$ induced by the morphism $\mathfrak{g}(R) \rightarrow G( R[\varepsilon] / (\varepsilon^2) )$ and denote it by ${\rm exp}(Tx)$. If $x$ is nilpotent, we have ${\rm exp}(Tx) \in G(R[T])$, and thus we obtain an element in $G(R)$ by taking $T=1$ and denote it by ${\rm exp}(x)$. Moreover, the above discussion induces an isomorphism $\mathfrak{u} \rightarrow U$ via Cambell--Hausdorff series, where $U$ is a unipotent algebraic group and $\mathfrak{u}$ is its Lie algebra. This morphism is denoted by ${\rm exp} : \mathfrak{u} \rightarrow U$ and is called the exponential map. Now we introduce the definition of nilpotent $G$-Higgs bundles \cite[Definition 2.10]{SSW24b}.

\begin{definition}[Definition 2.10 in \cite{SSW24b}]\label{defn_HIG_nil_0}
    A $G$-Higgs bundles $(V,\theta)$ on $X$ is \emph{nilpotent of exponent $\leq n-1$} if there exists a covering of $X$ by open affine subsets $U$ such that the set $\{ \theta|_{U} (\partial) \, | \, \text{$\partial$ is a section of $T_{U}$} \} \subseteq \mathfrak{g}(\mathcal{O}_{U})$ is nilpotent of exponent $\leq n-1$.
\end{definition}

For positive characteristics, the nilpotent elements considered in this paper are always assumed to be obtained by a mod $p$ reduction. Here is a precise description. Let $k$ be a perfect field in characteristic $p$. There is a natural morphism $\mathbb{Z}_{(p)} \rightarrow \mathbb{F}_p \rightarrow k$. Following the argument in \cite[\S 4]{Ser94} and \cite[\S 5]{Sei00}, there is an isomorphism $\overline{\rm exp}: \mathfrak{u} \rightarrow U$, where $U$ is a unipotent algebraic group over $k$ of nilpotency class $\leq p-1$ and $\mathfrak{u}$ is the Lie algebra of $U$, induced by the exponential map in characteristic zero via the base change $\mathbb{Z}_{(p)} \rightarrow k$. An element $x \in \mathfrak{g}$ is \emph{nilpotent of exponent $\leq p-1$}, if it is included in a nilpotent Lie subalgebra of nilpotency class $\leq p-1$ obtained in this way. Now we give the definition of nilpotent $G$-Higgs bundles and nilpotent integrable $G$-connections in positive characteristics.

\begin{definition}\label{defn_nil}
A $G$-Higgs bundles $(V,\theta)$ (resp. integrable $G$-connections $(E,\nabla)$) on $X$ is \emph{nilpotent of exponent $\leq p-1$} if there exists a covering of $X$ by open affine subsets $U$ such that the set $\{ \theta|_{U} (\partial) \, | \, \text{$\partial$ is a section of $T_{U}$} \}$ (resp. $\{ \psi |_{U} (\partial) \, | \, \text{$\partial$ is a section of $F^* T_{U'}$} \}$) is nilpotent of exponent $\leq p-1$. Moreover, such $G$-Higgs bundles (resp. integrable $G$-connections) are also called $[p]$-nilpotent.
\end{definition}

\begin{remark}
We would like to make a comment on the definition of nilpotency we give here. When $G={\rm GL}_n$, Definition \ref{defn_nil0} is equivalent to that given by Katz \cite{Kat70} (see a proof in \cite[Lemma 2.9]{SSW24b}). It is also natural to compare the definition with the one that $x^{[p]} = 0$ for $x \in \mathfrak{g}$ in positive characteristic, where $x^{[p]} \in \mathfrak{g}$ is the $p$-th power of $x$. When $G={\rm GL}_n$ and $p<n$, the condition $x^{[p]} = 0$ is not equivalent to the condition introduced by Katz. On the other hand, if $p$ is large enough, these two definitions are equivalent.
\end{remark}

\subsection{Nonabelian Hodge correspondence in characteristic zero}\label{subsect_CS_corr}

In this subsection, we briefly overview the classical nonabelian Hodge correspondence, which is also called the Corlette---Simpson correspondence \cite{Sim94a,Sim94b}. Although $X$ is a smooth projective curve, the correspondence holds for any complex smooth projective variety. The nonabelian Hodge correspondence is given as a one-to-one correspondence among three objects:
\begin{itemize}
    \item (poly)stable Higgs bundles on $X$ with vanishing Chern classes;
    \item (semi)simple flat bundles on $X$;
    \item (semi)simple ${\rm GL}$-representations of the fundamental group $\pi_1(X)$.
\end{itemize}
Simple flat bundles (or integrable connections) are also called irreducible flat bundles (or integrable connections). Moreover, this correspondence induces homeomorphisms among three moduli spaces
\begin{align*}
    \mathcal{M}_{\rm Dol}(X,n) \cong \mathcal{M}_{\rm dR}(X,n) \cong \mathcal{M}_{\rm B}(X,n),
\end{align*}
where
\begin{itemize}
    \item $\mathcal{M}_{\rm Dol}(X,n)$ is the moduli space of semistable Higgs bundles of rank $n$ on $X$ with vanishing Chern classes, which is called the \emph{Dolbeault moduli space};

    \item $\mathcal{M}_{\rm dR}(X,n)$ is the moduli space of semisimple flat bundles of rank $n$ on $X$, which is called the \emph{de Rham moduli space};

    \item $\mathcal{M}_{\rm B}(X,n)$ is the moduli space of semisimple ${\rm GL}_n$-representations of the fundamental group, which is called the \emph{Betti moduli space}.
\end{itemize}

Now, we come to principal objects. Let $G$ be a connected complex reductive group. We recall the following definitions from \cite[\S 6, Principal objects]{Sim92}. A $G$-Higgs bundle is \emph{of vanishing Chern classes} if the adjoint Higgs bundle is of vanishing Chern classes. Moreover, it implies that the associated Higgs bundle is of vanishing Chern classes for any faithful representation. A $G$-Higgs bundle is \emph{(semi)stable} if there exists a faithful representation of $G$ such that the associated Higgs bundle is $P$-(semi)stable. Then, the Corlette--Simpson correspondence for principal objects is given as follows:
\begin{theorem}[{\cite[Theorem 9.11 \& Lemma 9.14]{Sim94b}}]\label{thm_CS_G}
    There is a homeomorphism between $\mathcal{M}_{\rm Dol}(X,G)$ and $\mathcal{M}_{\rm dR}(X,G)$ and there is a complex analytic isomorphism between $\mathcal{M}_{\rm dR}(X,G)$ and $\mathcal{M}_{\rm B}(X,G)$, where
    \begin{itemize}
    \item $\mathcal{M}_{\rm Dol}(X,G)$ is the moduli space of semistable $G$-Higgs bundles on $X$ with vanishing Chern classes;

    \item $\mathcal{M}_{\rm dR}(X,G)$ is the moduli space of semisimple integrable $G$-connections on $X$;

    \item $\mathcal{M}_{\rm B}(X,G)$ is the moduli space of semisimple $G$-representations of the fundamental group.
\end{itemize}
\end{theorem}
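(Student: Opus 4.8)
The plan is to deduce the statement from the already-established rank-$n$ (i.e. $\mathrm{GL}_n$) Corlette--Simpson correspondence on $X$ by a Tannakian argument. Recall that, for a connected reductive $G$, a $G$-Higgs bundle (resp. an integrable $G$-connection, resp. a $G$-representation of $\pi_1(X)$) on $X$ is the same datum as an exact $\mathbb{C}$-linear $\otimes$-functor from $\mathrm{Rep}(G)$ to the category of Higgs bundles (resp. flat bundles, resp. local systems) on $X$, taking the standard compatibilities into account. So I would first set up these three neutral Tannakian categories and observe that the rank-$n$ correspondence, restricted to the subcategories of polystable Higgs bundles with vanishing Chern classes / semisimple flat bundles / semisimple representations, is an equivalence of $\mathbb{C}$-linear $\otimes$-categories: it is additive, exact on these subcategories, and compatible with tensor products and duals. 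The compatibility with $\otimes$ is the nontrivial input and is part of Simpson's rank-$n$ theory: the tensor product of two polystable Higgs bundles with vanishing Chern classes is again polystable with vanishing Chern classes (Hitchin--Kobayashi), and similarly on the de Rham and Betti sides.

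The next step is to match the stability/semisimplicity conditions on the two sides. By definition a $G$-Higgs bundle is (semi)stable if the Higgs bundle associated to \emph{some} faithful representation is $P$-(semi)stable; using Ramanathan's parabolic-reduction criterion and its Higgs analogue one checks this is equivalent to $R$-semistability in the sense of \cref{defn_stab_Higgs}, hence to the associated Higgs bundle being semistable for \emph{every} representation, because semistability together with vanishing Chern classes is stable under tensor operations, duals, and subquotients. The vanishing-Chern-class condition transfers to every associated bundle automatically. On the de Rham side, semisimplicity of $(E,\nabla)$ amounts to the Zariski closure of the relevant structure group being reductive, equivalently the associated flat bundle being semisimple for one, hence every, representation; likewise on the Betti side. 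Consequently the $\otimes$-functor attached to a polystable $G$-Higgs bundle with vanishing Chern classes lands in the subcategory of polystable Higgs bundles with vanishing Chern classes, and correspondingly in the other two cases, and the rank-$n$ correspondence carries one such functor to another. Tannakian reconstruction then yields a bijection on isomorphism classes, i.e. on the point sets of the three moduli spaces $\mathcal{M}_{\rm Dol}(X,G)$, $\mathcal{M}_{\rm dR}(X,G)$, $\mathcal{M}_{\rm B}(X,G)$.

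Finally I would upgrade this bijection to the asserted homeomorphism (for Dol--dR) and complex-analytic isomorphism (for dR--B). The moduli spaces $\mathcal{M}_{\bullet}(X,G)$ are built from the $\mathrm{GL}_n$ moduli spaces through a faithful representation together with the data of semistable reductions, so the comparison maps are realized by the same analytic constructions used in rank $n$: Corlette's and Simpson's solution of the harmonic-metric equation for Dol--dR, and the Riemann--Hilbert (monodromy) functor for dR--B. Continuity, and holomorphy on the dR--B side, then follow by restricting the rank-$n$ statements along the representation. The step I expect to be the main obstacle is the careful comparison of the several notions of (semi)stability for $G$-objects with semistability of the associated vector bundles---in particular verifying that Ramanathan's formulation and Simpson's ``there exists a faithful representation'' formulation agree, and that both interact correctly with the vanishing-Chern-class hypothesis so that the Tannakian subcategories in play are stable under the operations needed for reconstruction. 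A secondary technical point is that the category of semistable Higgs bundles with vanishing Chern classes is abelian but not semisimple, so the Tannakian dictionary for a reductive group must be applied at the level of its semisimplification, namely the polystable objects.
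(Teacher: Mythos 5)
This statement is not proved in the paper at all: it is imported verbatim from Simpson \cite[Theorem 9.11 and Lemma 9.14]{Sim94b}, and your Tannakian reduction to the rank-$n$ correspondence (principal objects as $\otimes$-functors from $\mathrm{Rep}(G)$, matching of the stability notions via a faithful representation, then comparison of the three moduli constructions) is essentially the strategy Simpson himself uses there. Your outline is therefore sound; the only point to treat more carefully than ``restrict the rank-$n$ statements along the representation'' is the continuity/analyticity of the comparison maps, since $\mathcal{M}_{\bullet}(X,G)\to\mathcal{M}_{\bullet}(X,n)$ is finite onto its image but not an embedding, which is why Simpson constructs all three $G$-moduli spaces compatibly over the $\mathrm{GL}_n$ ones before comparing them.
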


\begin{remark}\label{rem_nahc_G}
In the case of curves, the $R$-stability condition of $G$-bundles is equivalent to the slope stability condition for the associated vector bundles under a faithful representation $G \rightarrow {\rm GL}(W)$ (see \cite[Lemma 3.3]{Ram75} and \cite[Proposition 3.17]{Ram96a} for instance).
\end{remark}


\subsection{Nonabelian Hodge correspondence in positive characteristics}\label{subsect_nahc_p}

We review the nonabelian Hodge correspondence for principal objects in positive characteristic, which is first studied by Ogus--Vologodsky in the nilpotent case \cite{OV07} and then fully established by Chen--Zhu on curves \cite{CZ15}.

Denote by $\mathfrak{g}$ the Lie algebra of $G$, and there is a natural $\mathbb{G}_m$-action on $\mathfrak{g}$. Consider the stack $[\mathfrak{g} / (G \times \mathbb{G}_m)]$. Let $X \rightarrow [\ast / \mathbb{G}_m]$ be the morphism corresponding to the cotangent bundle $T^*_X$. Note that there is a natural morphism $[\mathfrak{g} /  (G \times \mathbb{G}_m)] \rightarrow [\ast / \mathbb{G}_m]$. Then the moduli stack ${\rm HIG}(X,G)$ of $G$-Higgs bundles on $X$ is defined as sections of the fiber product $X \times_{[\ast / \mathbb{G}_m]} [\mathfrak{g} /  (G \times \mathbb{G}_m)]$, i.e.
\begin{align*}
    {\rm HIG}(X,G) := {\rm Sect}(X,X \times_{[\ast / \mathbb{G}_m]} [\mathfrak{g} /  (G \times \mathbb{G}_m)]).
\end{align*}
The natural map $[\mathfrak{g} / G ] \rightarrow \mathfrak{g} / \! \! / G$ induces a morphism of stacks
\begin{align*}
    h_{X,G}: {\rm HIG}(X,G) \rightarrow \mathscr{A}(X,G),
\end{align*}
where
\begin{align*}
    \mathscr{A}(X,G) := {\rm Sect}(X, X \times_{ [\ast / \mathbb{G}_m]} [ (\mathfrak{g} / \! \! / G) / \mathbb{G}_m  ] ).
\end{align*}
This morphism $h_{X,G}$ is known as the \emph{Hitchin morphism} and if there is no ambiguity, we use the notation $h$ for convenience. Via the Chevalley restriction theorem $\mathfrak{g}  / \! \! / G \cong \mathfrak{t}/W$, $\mathscr{A}(X,G)$ is exactly the Hitchin base. Indeed, the above construction works in mixed characteristics, and thus the Hitchin morphism induces one on the Dolbeault moduli space $\mathcal{M}_{\rm Dol}(X,G)$ and we use the same notation for simplicity.

Let ${\rm MIC}(X,G)$ be the stack of integrable $G$-connections on $X$ (or stack of $G$-local systems on $X$). There is a morphism
\begin{align*}
    h_p: {\rm MIC}(X,G) \rightarrow \mathscr{A}(X',G)
\end{align*}
induced by the $p$-curvature, which is called the $p$-Hitchin morphism \cite[Proposition 3.1]{CZ15}.

Fixing a $W_2(k)$-lifting of $X$ and restricting to the nilpotent case, Chen--Zhu's result \cite[Theorem 1.2]{CZ15} recovers Ogus--Vologodsky correspondence \cite{OV07} and gives the following diagram
\begin{center}
\begin{tikzcd}
    {\rm HIG}_{p-1}(X',G) \arrow[rd,"h" description] \arrow[rr, out=10, in=170, dotted, "C^{-1}" description] & & {\rm MIC}_{p-1}(X,G) \arrow[ld, "h_p" description] \arrow[ll, out=190, in=350, dotted, "C" description] \\
    & \mathscr{A}(X',G) & \quad ,
\end{tikzcd}
\end{center}
where $C$ is the Cartier transform and $C^{-1}$ is the inverse Cartier transform. Moreover, de Cataldo--Groechenig--Zhang's work shows that the correspondence preserves the $R$-stability condition \cite{deGZ24}.

\section{Nilpotency of \texorpdfstring{$p$}{p}-curvatures}\label{sect_main}

\subsection{Rigidity and existence of nice models}

In this subsection, we go back to the setup in \cref{subsect_CS_corr} about the Corlette--Simpson correspondence and let $X$ be a complex smooth projective curve.

\begin{definition}
  For a scheme $\cc{X}\to S$, we denote by $\cc{X}^{\rm rig}$ the maximal open subscheme such that $\cc{X}^{\rm rig}\to S$ is quasi-finite at all points of $\cc{X}^{\rm rig}$.
\end{definition}

\begin{definition}\label{defn_rigid}
    A $R$-stable $G$-Higgs bundle (resp. $R$-stable integrable $G$-connection) is called \emph{rigid} if every nearby $G$-Higgs bundle (resp. integrable $G$-connection) can be deformed to it, or equivalently, if the corresponding point in the Dolbeault moduli space $\mathcal{M}_{\text{Dol}}(X_S/S,G)$ (resp. de Rham moduli space $\mathcal{M}_{\text{dR}}(X_S/S,G)$) is isolated. Denote by $\mathcal{M}^{\rm rig}_{\rm Dol}(X_S/S,G) \subseteq \mathcal{M}_{\rm Dol}(X_S/S,G)$ (resp. $\mathcal{M}^{\rm rig}_{\rm dR}(X_S/S,G) \subseteq \mathcal{M}_{\rm dR}(X_S/S,G)$) the open subscheme of rigid $G$-Higgs bundles (resp. integrable $G$-connections).
\end{definition}

Based on the above definition, here is an equivalent description of rigidity from deformation theory \cite[proof of Lemma 3.4]{EG20}, which does not depend on the existence of the moduli space and work in mixed characteristics. We only give the statement for $G$-Higgs bundles.

\begin{lemma}\label{lem_rig}
A $G$-Higgs bundle $(V,\theta)$ is not rigid if and only if there exists a positive dimension geometrically irreducible $k$-scheme $C$ of finite type, a $C$-family of $G$-Higgs bundles $(V_C,\theta_C)$ and two closed points $c_0,c_1 \in C(k')$ defined over a finite field extension $k'/k$ such that $(V_C,\theta_C)_{c_0} \cong (V,\theta)_{k'}$ and $(V_C,\theta_C)_{c_0}$ and $(V_C,\theta_C)_{c_1}$ are not isomorphic over the algebraic closure $\bar{k}$.
\end{lemma}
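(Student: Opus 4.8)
The plan is to deduce the lemma from Definition~\ref{defn_rigid}: by that definition $(V,\theta)$ is rigid exactly when its isomorphism class $x=[(V,\theta)]$ is an isolated point of the Dolbeault moduli space $\mathcal{M}_{\rm Dol}$, so the claim becomes the equivalence ``$x$ is not isolated'' $\Longleftrightarrow$ ``a connecting family of the stated shape exists.'' Two facts are used throughout. First, $\mathcal{M}_{\rm Dol}$ is a separated, quasi-projective $k$-scheme of finite type whose $\bar k$-points are the isomorphism classes of $R$-polystable $G$-Higgs bundles with vanishing Chern classes, and on the open stable locus $\mathcal{M}^{\rm s}_{\rm Dol}$ this is a bijection with isomorphism classes of $R$-stable objects (Simpson's construction, which also makes sense in mixed characteristic). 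Second, $\mathcal{M}^{\rm s}_{\rm Dol}$ is the coarse space of a finite-type algebraic stack $\mathfrak{M}^{\rm s}_{\rm Dol}$ carrying a universal family of $G$-Higgs bundles; equivalently, Simpson's GIT parameter scheme carries a tautological family and maps smoothly and surjectively onto $\mathcal{M}^{\rm s}_{\rm Dol}$.

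For the direction ``$\Leftarrow$'' I would base-change to $k'$ and observe that the family $(V_C,\theta_C)$, being a family of objects of $\mathcal{M}_{\rm Dol}$ with $(V_C,\theta_C)_{c_0}\cong(V,\theta)_{k'}$, induces a classifying morphism $\varphi\colon C_{k'}\to\mathcal{M}_{\rm Dol}$ sending $c_0$ to $x_{k'}$. Since $(V,\theta)$ is $R$-stable and $(V_C,\theta_C)_{c_0}$, $(V_C,\theta_C)_{c_1}$ are non-isomorphic over $\bar k$, one gets $\varphi(c_0)\neq\varphi(c_1)$, so $\varphi$ is non-constant; as $C$ is geometrically irreducible of positive dimension, $\overline{\varphi(C_{k'})}$ is an irreducible closed subset of $\mathcal{M}_{\rm Dol}$ of dimension $\geq 1$ through $x_{k'}$, whence $x_{k'}$ — and therefore $x$, since ${\rm Spec}\,k'\to{\rm Spec}\,k$ is finite surjective — is not isolated, i.e.\ $(V,\theta)$ is not rigid.

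For the direction ``$\Rightarrow$'' I would argue as follows. If $(V,\theta)$ is not rigid then $x$ is non-isolated, and since $(V,\theta)$ is $R$-stable, already in the open subscheme $\mathcal{M}^{\rm s}_{\rm Dol}$. Choose a smooth surjection $T\to\mathfrak{M}^{\rm s}_{\rm Dol}$ from a $k$-scheme $T$ of finite type, with pulled-back universal family $(V_T,\theta_T)$, so that the composite $T\to\mathcal{M}^{\rm s}_{\rm Dol}$ is smooth and surjective, and pick $t_0\in T$ over $x$. The finitely many irreducible components of $T$ through $t_0$ cover an open neighbourhood of $t_0$, whose (open) image in $\mathcal{M}^{\rm s}_{\rm Dol}$ is positive-dimensional since $x$ is non-isolated; hence some such component $C_0$ has positive-dimensional image, so $\dim C_0\geq 1$. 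After a finite extension $k'/k$, replace $C_0$ by a geometrically irreducible component $C$ of $(C_0)_{k'}$ dominating it; then $C$ still carries the family $(V_C,\theta_C):=(V_T,\theta_T)|_C$ and still has positive-dimensional image in $\mathcal{M}^{\rm s}_{\rm Dol}$. Enlarging $k'$ if necessary, choose $c_0\in C(k')$ over $x$ with $(V_C,\theta_C)_{c_0}\cong(V,\theta)_{k'}$, and choose $c_1\in C(k')$ mapping to a point of $\mathcal{M}^{\rm s}_{\rm Dol}$ distinct from $x_{k'}$ (possible because the image of $C$ there is positive-dimensional). Since $\mathcal{M}^{\rm s}_{\rm Dol}$ is separated with $\bar k$-points the isomorphism classes of $R$-stable objects, $(V_C,\theta_C)_{c_0}$ and $(V_C,\theta_C)_{c_1}$ are non-isomorphic over $\bar k$, so $(C,(V_C,\theta_C),c_0,c_1)$ is the required data.

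I expect the main obstacle to be the manufacture of a genuine family in the ``$\Rightarrow$'' direction: $\mathcal{M}_{\rm Dol}$ is merely a coarse moduli space, so it carries no universal family, and one must descend from the stack $\mathfrak{M}^{\rm s}_{\rm Dol}$ — equivalently, trivialize the automorphism gerbe $\mathfrak{M}^{\rm s}_{\rm Dol}\to\mathcal{M}^{\rm s}_{\rm Dol}$ after a smooth (or \'etale) base change — which is precisely why a finite extension $k'/k$ is needed, and why ``geometrically irreducible'' should be read relative to $k'$. Everything else is routine: finite type, separatedness and quasi-projectivity of $\mathcal{M}_{\rm Dol}$ (Simpson's construction, valid also in mixed characteristic); the fact that every point of the constructed $C$ automatically parametrizes an $R$-stable $G$-Higgs bundle with vanishing Chern classes, since $C$ maps to $\mathcal{M}^{\rm s}_{\rm Dol}$ by construction; and the bookkeeping of fields of definition, i.e.\ arranging that $c_0$, $c_1$ and the isomorphism $(V_C,\theta_C)_{c_0}\cong(V,\theta)_{k'}$ are all realized over a single finite extension $k'/k$, which is immediate as every scheme in sight is of finite type over $k$.
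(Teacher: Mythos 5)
Your argument is essentially correct, but note that the paper itself offers no proof of this lemma: it is presented as a restatement of rigidity borrowed from the proof of Lemma~3.4 in \cite{EG20}, and your write-up is in effect a fleshed-out version of that standard moduli-theoretic argument (non-isolated point $\Leftrightarrow$ non-constant family, mediated by a smooth atlas of the stable locus carrying a tautological family), adapted to $G$-Higgs bundles. Two caveats are worth flagging. First, in the ``$\Leftarrow$'' direction you treat $(V_C,\theta_C)$ as ``a family of objects of $\mathcal{M}_{\rm Dol}$,'' but the lemma as stated only posits a $C$-family of $G$-Higgs bundles, with no semistability or vanishing-Chern-class hypothesis; the classifying morphism $\varphi$ is a priori only defined on the open locus of $C_{k'}$ where the fibres are $R$-semistable with vanishing Chern classes, and $c_1$ need not lie in it. One should either read these conditions into the statement (as is clearly intended, and as is done in \cite{EG20}) or restrict to that open locus and rerun the argument there. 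Second, your proof leans on the existence of $\mathcal{M}_{\rm Dol}(Z,G)$ as a separated finite-type scheme together with a smooth cover by a parameter scheme carrying a universal family, in arbitrary (including positive) characteristic; this is available but nontrivial for principal objects (cf.\ \cite{GLSS08}), and it is precisely the dependence the paper wants to package away --- the authors emphasize that the family-theoretic characterization is the one that ``does not depend on the existence of the moduli space'' and is what gets used in the mixed-characteristic arguments (Lemmas~\ref{lem:rigid-higgs-nilpotent} and~\ref{lem_frobenius_pullback_rigid}). So your proof buys an actual verification of the equivalence with Definition~\ref{defn_rigid} where the moduli space exists, at the cost of invoking exactly the machinery the lemma is designed to circumvent.
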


The following definition is a lemma given in \cite[Lemma 3.1]{EG20}.

\begin{definition}
A scheme $S$ is called an \emph{arithmetic model} if it satisfies the following conditions
\begin{enumerate}
    \item[(1)] $S$ is of finite type and smooth over ${\rm Spec} \, \mathbb{Z}$;
    \item[(2)] $S$ has a unique generic point $\eta$ and there is an embedding of fields $k(\eta) \subseteq \mathbb{C}$;
    \item[(3)] we have ${\rm Spec} \, \mathbb{C} \times_S X_S \cong X$, where the product is taken along the embedding given in $(2)$.
\end{enumerate}
\end{definition}

\begin{proposition}[{\cite[Proposition 3.3]{EG20}}]\label{prop:nice-models}
  There exists an arithmetic model $S$ such that
  \begin{itemize}
    \item Each rigid integrable $G$-connection $(E,\nabla)$ can be spread out to $(E_S,\nabla_S)$ on $X_S$, which is $R$-stable over geometric points.

    \item For every rigid $G$-Higgs bundle $(V,\theta)$, there exists a spreading-out $(V_S,\theta_S)$ which is $R$-stable over geometric points and $\theta_S$ is nilpotent.

    \item  $(E_S,\nabla_S)$ (resp. $(V_S,\theta_S)$) induces a section
    \begin{align*}
        [E_S,\nabla_S] : S\to\cc{M}_{\rm dR}(X_S/S, G) \quad (resp. \, \, [V_S,\theta_S]: S\to\cc{M}_{\rm Dol}(X_S/S, G))
    \end{align*}
    factoring through $\cc{M}_{\rm dR}^{\rm rig}(X_S/S, G)$ (resp. $\cc{M}_{\rm Dol}^{\rm rig}(X_S/S, G)$).

    \item For each $y \in |\cc{M}_{\rm dR}^{\rm rig}(X_S/S, G)|$ (resp. $y \in |\cc{M}_{\rm Dol}^{\rm rig}(X_S/S, G)|$), there exists $(E_S,\nabla_S)$ (resp. $(V_S,\theta_S)$) such that $y$ belongs to the set-theoretic image $[E_S,\nabla_S](|S|)$ (resp. $[V_S,\theta_S](|S|)$).
  \end{itemize}
\end{proposition}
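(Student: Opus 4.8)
The plan is to carry out the argument of \cite[Lemma 3.1 and Proposition 3.3]{EG20} for $G$-bundles, substituting the spectral data morphism $\mathrm{sd}$ of \S\ref{subsect_spec_data_mor} for the characteristic polynomial map. The starting observation is that $\mathcal{M}^{\mathrm{rig}}_{\mathrm{Dol}}(X,G)$ and $\mathcal{M}^{\mathrm{rig}}_{\mathrm{dR}}(X,G)$ are, by definition, open subschemes of finite-type $\mathbb{C}$-schemes that are quasi-finite over $\mathrm{Spec}\,\mathbb{C}$, hence each is a \emph{finite} set of closed points with residue field $\mathbb{C}$; by \cref{thm_CS_G} the two sets correspond. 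Fix representatives $(V^{(1)},\theta^{(1)}),\dots,(V^{(r)},\theta^{(r)})$ and $(E^{(1)},\nabla^{(1)}),\dots,(E^{(r)},\nabla^{(r)})$ of the isomorphism classes of rigid $G$-Higgs bundles and rigid integrable $G$-connections; by \cref{defn_rigid} each is $R$-stable over $\mathbb{C}$, and by \cref{lem:rigid-higgs-nilpotent} each $\theta^{(j)}$ is nilpotent (this is where $\mathrm{sd}$ and the $\mathbb{C}^\ast$-action on $\mathcal{M}_{\mathrm{Dol}}$ enter: a rigid point is $\mathbb{C}^\ast$-fixed, so its image under $\mathrm{sd}$ is the unique, hence closed, $\mathbb{G}_m$-fixed point of the cone $\mathfrak{C}^d_{\mathfrak{g}}/\!\!/G$).

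I would then spread the package $\bigl(X,\ G,\ (V^{(j)},\theta^{(j)})_j,\ (E^{(j)},\nabla^{(j)})_j\bigr)$ out over a finite-type $\mathbb{Z}$-algebra, taking the Chevalley $\mathbb{Z}$-form of $G$. Enlarging the base so that its generic residue field contains the (finitely many) fields of definition of all these objects, and then shrinking to impose smoothness over $\mathbb{Z}$, irreducibility, and a compatible embedding $k(\eta)\hookrightarrow\mathbb{C}$, produces --- exactly as in \cite[Lemma 3.1]{EG20} --- an arithmetic model $S$ together with spreadings-out $(X_S,(V^{(j)}_S,\theta^{(j)}_S))$ and $(X_S,(E^{(j)}_S,\nabla^{(j)}_S))$ recovering the original objects after base change along $k(\eta)\hookrightarrow\mathbb{C}$. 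They induce sections $[V^{(j)}_S,\theta^{(j)}_S]\colon S\to\mathcal{M}_{\mathrm{Dol}}(X_S/S,G)$ and $[E^{(j)}_S,\nabla^{(j)}_S]\colon S\to\mathcal{M}_{\mathrm{dR}}(X_S/S,G)$. Next I would shrink $S$ finitely many more times --- replacing $S$ by the intersection of the resulting dense opens, still an arithmetic model --- to arrange three things. First, each spread-out object is $R$-stable over all geometric points of $S$: $R$-stability is an open condition which, via \cref{rem_nahc_G}, is detected on associated bundles, and it holds on the fibre over $\eta$ since it holds over $\mathbb{C}$. Second, each section factors through $\mathcal{M}^{\mathrm{rig}}$, because $\mathcal{M}^{\mathrm{rig}}\subseteq\mathcal{M}$ is open and the section meets $\mathcal{M}^{\mathrm{rig}}$ over $\eta$. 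Third, each $\theta^{(j)}_S$ is nilpotent: the locus in $\mathscr{B}(X_S/S,G)$ of sections landing in the zero section is closed (the origin of $\mathfrak{C}^d_{\mathfrak{g}}/\!\!/G$ being closed, and taking sections of a closed substack being a closed condition), hence so is its preimage under $\mathrm{sd}$; pulling this back along $[V^{(j)}_S,\theta^{(j)}_S]$ gives a closed subscheme of $S$ containing $\eta$, hence all of $S$ as $S$ is integral. These three facts are precisely the first three bulleted assertions.

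For the last bullet I would, following \cite[proof of Proposition 3.3]{EG20}, shrink $S$ once more so that $\pi\colon\mathcal{M}^{\mathrm{rig}}_{\mathrm{dR}}(X_S/S,G)\to S$ is finite with every irreducible component of the source dominating $S$ (quasi-finiteness gives a factorisation through a finite morphism by Zariski's main theorem, and deleting the images of the finitely many offending closed pieces, which do not meet the generic fibre, achieves this). Then the generic point $\zeta_Z$ of each component $Z$ maps to $\eta$, hence is one of the finitely many points of the finite $k(\eta)$-scheme $\mathcal{M}^{\mathrm{rig}}_{\mathrm{dR}}(X_\eta/\eta,G)$; since formation of the moduli commutes with $k(\eta)\hookrightarrow\mathbb{C}$ and dimension is unchanged under field extension, these points --- all $k(\eta)$-rational by our choice of $S$ --- correspond under base change to the $(E^{(j)},\nabla^{(j)})$, so $\zeta_Z\in[E^{(j)}_S,\nabla^{(j)}_S](|S|)$ for some $j$. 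As $\pi$ is separated this section has closed image, which then contains $\overline{\{\zeta_Z\}}=Z$; running over the components gives $\mathcal{M}^{\mathrm{rig}}_{\mathrm{dR}}(X_S/S,G)=\bigcup_j[E^{(j)}_S,\nabla^{(j)}_S](|S|)$, and the same argument on the Dolbeault side completes the proof.

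The hard part will not be any single step but the foundational input in mixed characteristic for \emph{principal} $G$-bundles: one needs the relative moduli spaces $\mathcal{M}_{\mathrm{Dol}}(X_S/S,G)$, $\mathcal{M}_{\mathrm{dR}}(X_S/S,G)$, their open rigid loci, and the morphism $\mathrm{sd}$ over $S$, together with the compatibility of their formation with base change and the finiteness of the rigid locus over $S$, to be available just as in the $\mathrm{GL}_n$ case treated by \cite{EG20}; once that is in place, and once one has done the bookkeeping of fitting all the finitely many rigid objects and their fields of definition into one arithmetic model, the only genuinely new ingredient is the use of $\mathrm{sd}$ (here and, upstream, in \cref{lem:rigid-higgs-nilpotent}) to make nilpotency a closed condition along $S$.
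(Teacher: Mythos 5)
Your proposal is correct and follows essentially the same route the paper takes: the paper proves this proposition only by citation to \cite[Lemma 3.1 and Proposition 3.3]{EG20}, and your write-up is a faithful adaptation of that argument to the $G$-bundle setting, with the spectral data morphism playing the role of the characteristic polynomial map exactly as intended. The details you supply (finiteness of the rigid loci, spreading out, shrinking $S$, and the Zariski main theorem step for the last bullet) are precisely the ones the paper leaves implicit.
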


\begin{remark}
In \cite[\S 3]{EG20}, the authors fix a line bundle $L$ throughout the paper, treated as the fixed determinant line bundle. Therefore, the case they consider is actually for rigid (twisted) integrable ${\rm SL}_n$-connections.
\end{remark}

\subsection{Proof of the main result}

In this section, $X$ is always a connected smooth projective curve over $\mathbb{C}$, and we use the notation $Z$ for a smooth projective curve over a perfect field $k$ in mixed characteristics.

\begin{lemma}\label{lem:rigid-higgs-nilpotent}
Rigid $G$-Higgs bundles over $Z$ have nilpotent Higgs field.
\end{lemma}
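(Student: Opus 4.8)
The statement is the characteristic-zero (or, more generally, characteristic-$0$ type) assertion that $R$-stable rigid $G$-Higgs bundles have nilpotent Higgs field. The plan is to exploit the spectral data morphism $\mathrm{sd}\colon \mathcal{M}_{\mathrm{Dol}}(Z, G) \to \mathscr{B}(Z,G)$ together with the $\mathbb{G}_m$-action on Higgs bundles coming from rescaling the Higgs field. The key observation is that nilpotency of $\theta$ is precisely the statement that $(V, \theta)$ lies in the zero fiber $\mathrm{sd}\inv(0)$: indeed, by the (higher-dimensional) Chevalley restriction picture recalled in \cref{rem_higher_Che_rest_thm}, the invariant-theoretic quotient $\mathfrak{C}^d_{\mathfrak{g}}/\!\!/G$ parametrizes the "characteristic polynomial" data of a commuting tuple, and a commuting tuple of elements of $\mathfrak{g}$ maps to $0$ in $\mathfrak{C}^d_{\mathfrak{g}}/\!\!/G$ if and only if each entry is nilpotent (for $\mathfrak{gl}_n$ this is the vanishing of all coefficients of the characteristic polynomials; in general one reduces to a faithful representation). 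So $\mathrm{sd}(V,\theta) = 0$ in $\mathscr{B}(Z,G)$ is equivalent to $\theta$ being nilpotent in the sense of \cref{defn_HIG_nil_0}.

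\textbf{Main steps.} First I would recall that $\mathbb{G}_m$ acts on $\mathcal{M}_{\mathrm{Dol}}(Z,G)$ by $t\cdot(V,\theta) = (V, t\theta)$, and that this action is compatible, via $\mathrm{sd}$, with a $\mathbb{G}_m$-action on $\mathscr{B}(Z,G)$ that is \emph{contracting}: all weights of $\mathbb{G}_m$ on $\mathfrak{C}^d_{\mathfrak{g}}/\!\!/G$ are strictly positive (the invariants are built from positive-degree polynomials in the $\theta_i$), so $\lim_{t\to 0} t\cdot b = 0$ for every $b \in \mathscr{B}(Z,G)$. Second, given a rigid $R$-stable $G$-Higgs bundle $(V,\theta)$, consider the orbit map $\mathbb{G}_m \to \mathcal{M}_{\mathrm{Dol}}(Z,G)$, $t \mapsto (V, t\theta)$. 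Each point $(V, t\theta)$ for $t \in \mathbb{G}_m$ is again $R$-stable (rescaling the Higgs field does not affect the stability inequality, as it only rescales the compatible reductions), hence isomorphic to $(V,\theta)$ because rigidity forces the orbit to be a single point of the moduli space — here I would invoke \cref{lem_rig} in the contrapositive: if $(V, t\theta) \not\cong (V,\theta)$ for some $t$, the orbit closure would give a positive-dimensional family of pairwise non-isomorphic $G$-Higgs bundles specializing to $(V,\theta)$, contradicting rigidity. Third, since $\mathrm{sd}$ is $\mathbb{G}_m$-equivariant and $(V, t\theta) \cong (V,\theta)$ for all $t$, the point $b := \mathrm{sd}(V,\theta) \in \mathscr{B}(Z,G)$ satisfies $t\cdot b = b$ for all $t \in \mathbb{G}_m$; combined with the contracting property $\lim_{t\to 0} t\cdot b = 0$ this forces $b = 0$. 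Finally, by the identification above, $b = 0$ means exactly that $\theta$ is nilpotent, which completes the proof.

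\textbf{Expected main obstacle.} The delicate point is making precise the equivalence "$\mathrm{sd}(V,\theta) = 0 \iff \theta$ nilpotent" at the level of the stack/scheme $\mathscr{B}(Z,G)$ rather than pointwise on $Z$, and in a way that works for general reductive $G$ (not just $\mathrm{GL}_n$ or $\mathrm{SL}_n$, where one has honest characteristic polynomials). One must check that the fibre over the section $0 \in \mathscr{B}(Z,G)$ of $\mathrm{sd}$ consists precisely of the nilpotent $G$-Higgs bundles in the sense of \cref{defn_HIG_nil_0}; the cleanest route is to reduce to a faithful representation $G \hookrightarrow \mathrm{SL}_N$, use that nilpotency of $\theta$ is detected by nilpotency of the associated $\mathfrak{sl}_N$-Higgs field, and that for $\mathfrak{gl}$/$\mathfrak{sl}$ the vanishing of the spectral/Chow data is the vanishing of all elementary symmetric functions of the eigenvalues, i.e. nilpotency. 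A secondary point requiring a line of justification is the $\mathbb{G}_m$-equivariance of $\mathrm{sd}$ and that the $\mathbb{G}_m$-action on the base $\mathfrak{C}^d_{\mathfrak{g}}/\!\!/G$ has only positive weights — both follow from the construction of $\mathscr{B}(Z,G)$ via $[(\mathfrak{C}^d_{\mathfrak{g}}/\!\!/G)/\mathrm{GL}_d]$, since the $\mathbb{G}_m$-scaling on $\theta$ corresponds to scaling the $\theta_i$, under which the graded coordinate ring of the affine cone $\mathfrak{C}^d_{\mathfrak{g}}/\!\!/G$ sits in strictly positive degrees.
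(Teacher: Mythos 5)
Your proposal is correct and follows essentially the same route as the paper: the $\mathbb{G}_m$-rescaling of the Higgs field, the equivariance of the spectral data morphism $\mathrm{sd}$, and the characterization of rigidity in \cref{lem_rig} are exactly the ingredients of the paper's proof, which runs the argument in the contrapositive (non-nilpotent $\Rightarrow$ the orbit $(V,\lambda\theta)$ is a nontrivial deformation). Your additional care about why $\mathrm{sd}(V,\theta)=0$ is equivalent to nilpotency of $\theta$, and why the $\mathbb{G}_m$-action on $\mathfrak{C}^d_{\mathfrak{g}}/\!\!/G$ is contracting, fills in details the paper leaves implicit but does not change the approach.
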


\begin{proof}
As vector spaces, scalar multiplication gives a $\mathbb{G}_m$-action on $\mathfrak{g}$, and thus induces a $\mathbb{G}_m$-action on $\mathfrak{g} / \! \! / G \cong \mathfrak{t}/W$. Then the above $\mathbb{G}_{m}$-actions induce ones on the moduli stacks ${\rm HIG}(X,G)$ and $\mathscr{A}(X,G)$ respectively, which are compatible with the Hitchin morphism $h: {\rm HIG}(X,G) \rightarrow \mathscr{A}(X,G)$.

With respect to the $\mathbb{G}_m$-actions given above, the proof of this lemma is an analog of \cite[Lemma 2.1]{EG20}. We include it here for completeness. Let $(V,\theta)$ be a rigid $G$-Higgs bundle on $Z$. If $\theta$ is not nilpotent, then image $h( (V,\theta) )$ will be nontrivial. Since $\mathbb{G}_m$-action is compatible with the Hitchin morphism, we obtain a natural $\mathbb{G}_m$-family $(V,\lambda \theta)$, which is a nontrivial deformation of $(V,\theta)$. By Lemma \ref{lem_rig}, this is a contradiction.
\end{proof}

\begin{lemma}\label{lem:cartier-nilpotent}
Suppose that the characteristic $p$ is pretty good and big. Then the inverse Cartier transform $C^{-1}$ given in \cref{subsect_nahc_p} sends a rigid $G$-Higgs bundle on $Z'$ to an integrable $G$-connection on $Z$ with nilpotent $p$-curvature.
\end{lemma}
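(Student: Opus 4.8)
The plan is to combine the two structural results that have already been assembled: the inverse Cartier transform $C^{-1}_{\rm exp}$ of Theorem \ref{thm_main_SSW24}, which takes nilpotent $G$-Higgs bundles (of exponent $\leq p-1$) to nilpotent integrable $G$-connections of exponent $\leq p-1$, and the fact that a rigid $G$-Higgs bundle has nilpotent Higgs field (Lemma \ref{lem:rigid-higgs-nilpotent}). So let me think about what $C^{-1}_{\rm exp}$ does to a rigid $G$-Higgs bundle.

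First I would use Lemma \ref{lem:rigid-higgs-nilpotent} to conclude that a rigid $G$-Higgs bundle $(V,\theta)$ on $Z'$ has nilpotent Higgs field. The next point is that "pretty good and big" characteristic should be exactly the hypothesis needed to upgrade plain nilpotency to nilpotency of exponent $\leq p-1$: by the discussion in Remark \ref{rem_low_bound} (taking $p$ larger than the Coxeter number $h$ of $G$), every nilpotent element of $\mathfrak{g}$ is $[p]$-nilpotent, hence $(V,\theta)$ lies in ${\rm HIG}_{p-1}(Z',G)$. Here I should be a little careful that nilpotency of the Higgs field in the sense of Lemma \ref{lem:rigid-higgs-nilpotent} — which is phrased via the spectral data morphism ${\rm sd}$, i.e. the image ${\rm sd}((V,\theta))$ is the trivial/zero section — really does match Definition \ref{defn_nil}, i.e. that locally the values $\theta|_{U}(\partial)$ land in a common nilpotent Lie subalgebra; this follows because the fibre of $[\mathfrak{C}^d_{\mathfrak{g}}/G]\to \mathfrak{C}^d_{\mathfrak{g}}/\!\!/G$ over the origin is the (global) nilpotent commuting variety, so a $d$-tuple of commuting nilpotent elements generates a nilpotent Lie subalgebra, whose class is bounded by $p-1$ once $p>h$.

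Once $(V,\theta)\in{\rm HIG}_{p-1}(Z',G)$, Theorem \ref{thm_main_SSW24} applies and produces $(E,\nabla)=C^{-1}_{\rm exp}(V,\theta)\in {\rm MIC}_{p-1}(Z,G)$, i.e. an integrable $G$-connection whose $p$-curvature $\psi$ is nilpotent of exponent $\leq p-1$ — in particular nilpotent, which is the conclusion. (If one wants to be even more explicit, the $p$-spectral data morphism ${\rm sd}_p$ of the lemma in \S\ref{subsect_spec_data_mor} sends $(E,\nabla)$ to ${\rm sd}(V,\theta)$, which is trivial because $\theta$ is nilpotent, so $\psi$ is nilpotent directly; but this is essentially the same content.)

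The main obstacle, and the one place the argument needs genuine care rather than just quotation of earlier results, is the bookkeeping around the word "nilpotent": reconciling the three notions in play — nilpotent in the Lie-theoretic pointwise sense, nilpotent of exponent $\leq p-1$ in the sense of Definition \ref{defn_nil}, and "${\rm sd}$ of it is the zero section" — and checking that "pretty good and big" $p$ makes all three coincide for a rigid $G$-Higgs bundle, uniformly over $Z'$ (so that the local nilpotent Lie subalgebras can be taken with class $\leq p-1$ simultaneously). This is where I expect one must invoke the bound by the Coxeter number from Remark \ref{rem_low_bound} together with the structure of the nilpotent commuting variety; everything after that is a direct application of Theorem \ref{thm_main_SSW24}.
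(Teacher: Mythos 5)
Your proof is correct and follows essentially the same route as the paper: \cref{lem:rigid-higgs-nilpotent} gives nilpotency of the Higgs field, and the inverse Cartier transform then forces nilpotency of the $p$-curvature (the paper phrases this last step via the commutative triangle ${\rm sd}_p\circ C^{-1}_{\rm exp}={\rm sd}$, you via the fact that $C^{-1}_{\rm exp}$ lands in ${\rm MIC}_{p-1}$ by construction --- the same content, and you note the diagram route yourself). Your explicit bookkeeping that large $p$ upgrades plain nilpotency to nilpotency of exponent $\leq p-1$, so that $(V,\theta)$ actually lies in the domain of $C^{-1}_{\rm exp}$, is a point the paper's one-line proof leaves implicit in the hypothesis ``pretty good and big'' and in \cref{rem_low_bound}.
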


\begin{proof}
    By Lemma \ref{lem:rigid-higgs-nilpotent}, a rigid $G$-Higgs bundle $(V,\theta)$ on $Z'$ is nilpotent. Therefore, the integrable $G$-connection $C^{-1}(V,\theta)$ has nilpotent $p$-curvature due to the commutativity of the following diagram given in \cref{subsect_nahc_p}.
    \begin{center}
    \begin{tikzcd}
    {\rm HIG}_{p-1}(Z',G) \arrow[rd,"h" description] \arrow[rr, "C^{-1}" description] & & {\rm MIC}_{p-1}(Z,G) \arrow[ld, "h_p" description]  \\
    & \mathscr{A}(Z',G) &
    \end{tikzcd}
    \end{center}
\end{proof}

\begin{lemma}\label{lem_frobenius_pullback_rigid}
With the same setup as in Lemma \ref{lem:cartier-nilpotent}, a $R$-stable $G$-Higgs bundle $(V,\theta)$ (resp. $R$-stable integrable $G$-connection $(E,\nabla)$) on $Z$ is rigid if and only if the $R$-stable $G$-Higgs bundle $\pi^*_{Z/k}(V,\theta)$ (resp. the $R$-stable integrable $G$-connection $\pi^*_{Z/k}(E,\nabla)$) on $Z'$ is rigid.
\end{lemma}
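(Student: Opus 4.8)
The plan is to reduce the whole statement to the single observation that $\pi_{Z/k}\colon Z'\to Z$ is an isomorphism of schemes, and then feed this into the moduli-free rigidity criterion of \cref{lem_rig}.

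First I would record that, because $k$ is perfect, the absolute Frobenius $F_k\colon\operatorname{Spec}k\to\operatorname{Spec}k$ is an isomorphism of schemes; since $Z'$ sits in a Cartesian square over $F_k$, the morphism $\pi_{Z/k}$ is an isomorphism of schemes as well, though not of $k$-schemes. Being an isomorphism of $\mathbb F_p$-schemes it identifies $\Omega_{Z'/\mathbb F_p}$ with $\pi_{Z/k}^*\Omega_{Z/\mathbb F_p}$, and since $k$ is perfect these agree with $\Omega_{Z'/k}$ and $\Omega_{Z/k}$ respectively. Hence $\pi_{Z/k}^*$ transports $G$-bundles, adjoint bundles, Higgs fields and integrable $G$-connections, and defines equivalences between the categories of $G$-Higgs bundles (resp.\ integrable $G$-connections) on $Z$ and on $Z'$; it preserves $R$-(semi)stability because the degree of any associated line bundle, computed with respect to the pulled-back polarization, is unchanged under a scheme isomorphism (so $\pi_{Z/k}^*$ of an $R$-stable object is again $R$-stable, as implicitly used in the statement). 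Its quasi-inverse is pullback along the scheme isomorphism exhibiting $Z$ as the inverse Frobenius twist of $Z'$, which is available because $F_k$ is invertible. Thus $Z$ and $Z'$ enter symmetrically, and it suffices to show that $\pi_{Z/k}^*$ carries non-rigid objects to non-rigid objects.

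Now suppose $(V,\theta)$ on $Z$ is not rigid. By \cref{lem_rig} there are a positive-dimensional geometrically irreducible $k$-scheme $C$ of finite type, a $G$-Higgs bundle $(V_C,\theta_C)$ on $Z\times_k C$, and closed points $c_0,c_1\in C(k')$ over a finite extension $k'/k$ with $(V_C,\theta_C)_{c_0}\cong(V,\theta)_{k'}$, while $(V_C,\theta_C)_{c_0}$ and $(V_C,\theta_C)_{c_1}$ are non-isomorphic over $\bar k$. I would then pass to the Frobenius twist $C':=C\times_{\operatorname{Spec}k,F_k}\operatorname{Spec}k$: it is of finite type over $k$, has the same underlying topological space as $C$ (hence is still positive-dimensional), and is still geometrically irreducible because absolute Frobenius is an automorphism of $\operatorname{Spec}\bar k$. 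There is a canonical isomorphism of schemes $(Z\times_k C)'\cong Z'\times_k C'$ over $C'$, reflecting the elementary identity $(A\otimes_k B)'\cong A'\otimes_k B'$ for $k$-algebras, and pulling $(V_C,\theta_C)$ back along it produces a $C'$-family of $G$-Higgs bundles on $Z'$. The points $c_0,c_1$ become closed points $c_0',c_1'$ of $C'$, rational over a finite extension of $k$; since pullback along a morphism of schemes commutes with restriction to a point, the fibres of the new family at $c_i'$ are base changes of $\pi_{Z/k}^*\big((V_C,\theta_C)_{c_i}\big)$, so the fibre over $c_0'$ is $\pi_{Z/k}^*(V,\theta)$ up to scalar extension, and the two fibres remain non-isomorphic over $\bar k$ because $Z_{\bar k}\cong Z'_{\bar k}$ as schemes. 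By \cref{lem_rig} again, $\pi_{Z/k}^*(V,\theta)$ is not rigid. The argument for integrable $G$-connections is word-for-word the same, with $\theta$ replaced by $\nabla$, the connection pulling back along $\pi_{Z/k}$ thanks to $\Omega_{Z'/k}\cong\pi_{Z/k}^*\Omega_{Z/k}$.

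The heart of the matter is trivial once one recognizes $\pi_{Z/k}$ as a scheme isomorphism; the only genuine obstacle is the bookkeeping around the Frobenius twist of the parameter scheme $C$. One must confirm that this twist preserves precisely the hypotheses occurring in \cref{lem_rig} --- finite type, positive dimension, geometric irreducibility over $k$, and closed points rational over finite extensions of $k$ --- which works because, $k$ being perfect, the twist is up to canonical isomorphism an autoequivalence of finite-type $k$-schemes that does nothing to underlying topological spaces; and one must pin down the identification $(Z\times_k C)'\cong Z'\times_k C'$ carefully enough that the distinguished fibres are matched correctly. I expect these compatibilities to be the only point requiring real attention.
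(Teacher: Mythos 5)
Your proposal is correct and follows essentially the same route as the paper, which simply invokes the analogue of Esnault--Groechenig's Lemma 3.4 together with the moduli-free rigidity criterion of \cref{lem_rig}: the whole content is that $\pi_{Z/k}$ is an isomorphism of abstract schemes (as $k$ is perfect), so transporting the witnessing family of \cref{lem_rig} across the Frobenius twist preserves all its hypotheses. You merely spell out the bookkeeping (twisting the parameter scheme $C$, the identification $(Z\times_k C)'\cong Z'\times_k C'$, preservation of degrees and of geometric irreducibility) that the paper leaves implicit.
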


\begin{proof}
    This is an analog of \cite[Lemma 3.4]{EG20} with respect to the equivalent definition of rigidity in Lemma \ref{lem_rig}.
\end{proof}

\begin{proposition}\label{prop_OV_corr_rigid}
  Let $S$ be an arithmetic model for a given complex smooth projective variety $X$ given in \cref{prop:nice-models}. There exists a positive integer $N$, depending on the given data $(X,S,G)$, such that for any closed point $s \in S$ with $\rr{char} \, k(s) > N$, and any rigid $G$-Higgs bundle $(V_s,\theta_s)$, the inverse Cartier transform $C^{-1}(V'_s,\theta'_s)$ is a rigid integrable $G$-connection, where $(V'_s,\theta'_s):=\pi^*_{X/k}(V_s,\theta_s)$.
\end{proposition}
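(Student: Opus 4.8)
The plan is to combine the rigidity-preservation statements already in hand with the spectral-data machinery of Section~\ref{subsect_spec_data_mor}. The key point is that Proposition~\ref{prop:nice-models} gives us, for every closed point $s\in|S|$, a rigid $G$-Higgs bundle $(V_s,\theta_s)$ on $X_s$ that spreads out from a rigid $G$-Higgs bundle on $X$, and by Lemma~\ref{lem:rigid-higgs-nilpotent} this $\theta_s$ (being rigid) is nilpotent. So the first step is to choose $N$ large enough that two separate requirements are met: first, that the characteristic is ``pretty good and big'' in the sense required by Lemma~\ref{lem:cartier-nilpotent} and Theorem~\ref{thm_main_SSW24} (so that the inverse Cartier transform $C^{-1}_{\rm exp}$ is defined and is an equivalence of categories preserving $R$-stability); and second, following Remark~\ref{rem_low_bound}, that $p>N$ forces every nilpotent element of $\mathfrak{g}$ to be $[p]$-nilpotent, i.e.\ that $(V_s,\theta_s)$ is nilpotent \emph{of exponent $\leq p-1$}, so it genuinely lands in ${\rm HIG}_{p-1}(X_s',G)$. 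One takes $N$ to be the maximum of the Coxeter number of $G$ and whatever bound the exponential-twisting construction in \cite{SSW24b} demands; this $N$ depends only on $(X,S,G)$ as claimed.

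The second step is the chain of equivalences. Given the rigid $G$-Higgs bundle $(V_s,\theta_s)$ on $X_s$, form $(V_s',\theta_s')=\pi^*_{X/k}(V_s,\theta_s)$ on $X_s'$. By Lemma~\ref{lem_frobenius_pullback_rigid}, pullback along $\pi_{X/k}$ preserves rigidity (in both directions), so $(V_s',\theta_s')$ is a rigid $R$-stable $G$-Higgs bundle on $X_s'$; moreover it is still nilpotent of exponent $\leq p-1$, hence an object of ${\rm HIG}_{p-1}(X_s',G)$. Now apply $C^{-1}_{\rm exp}$ from Theorem~\ref{thm_main_SSW24}: the result $C^{-1}_{\rm exp}(V_s',\theta_s')$ is an $R$-stable integrable $G$-connection on $X_s$ (the equivalence preserves $R$-stability), and it has nilpotent $p$-curvature by Lemma~\ref{lem:cartier-nilpotent}.

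The final step is to promote $R$-stability of this connection to \emph{rigidity}. This is where one uses that $C^{-1}_{\rm exp}$ is an equivalence of categories (indeed of stacks, as remarked after Theorem~\ref{thm_main_SSW24}), together with Lemma~\ref{lem_rig}'s deformation-theoretic characterization of rigidity. Concretely, suppose $C^{-1}_{\rm exp}(V_s',\theta_s')$ were not rigid: then by Lemma~\ref{lem_rig} there is a positive-dimensional geometrically irreducible family of integrable $G$-connections through it with a non-isomorphic nearby member. Since $C^{-1}_{\rm exp}$ is an equivalence compatible with base change over $S$-schemes (Remark~\ref{rem_ov_corr}), applying $C_{\rm exp}$ to this family produces a positive-dimensional family of nilpotent $G$-Higgs bundles through $(V_s',\theta_s')$ with a non-isomorphic nearby member — contradicting the rigidity of $(V_s',\theta_s')$ established in the previous paragraph. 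Hence $C^{-1}_{\rm exp}(V_s',\theta_s')$ is rigid.

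The main obstacle is the bookkeeping in the last step: one must be careful that the family produced by Lemma~\ref{lem_rig} consists of objects to which $C_{\rm exp}$ applies, i.e.\ that the nearby connections in the family are still \emph{nilpotent of exponent $\leq p-1$}. This requires either that the family can be taken inside ${\rm MIC}_{p-1}(X_s/S,G)$ from the outset — which is reasonable since nilpotency of $p$-curvature is a closed condition and, for $p>N$, ``nilpotent'' already means ``of exponent $\leq p-1$'' even over the finite-type base $C$ (again invoking Remark~\ref{rem_low_bound}, possibly enlarging $N$ to handle the base of the deformation) — or a direct argument that $C_{\rm exp}$ extends over the relevant locus. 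Once this compatibility is pinned down, the rest is a formal consequence of the equivalence of categories and Lemma~\ref{lem_rig}.
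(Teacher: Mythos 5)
Your first two steps (choosing $p$ large, pulling back along $\pi_{X/k}$, applying $C^{-1}_{\rm exp}$ and Lemma~\ref{lem:cartier-nilpotent}) match the paper. The gap is in the final step, and it is exactly the obstacle you flag at the end but do not resolve: when you suppose $C^{-1}_{\rm exp}(V'_s,\theta'_s)$ is not rigid and invoke Lemma~\ref{lem_rig}, you obtain a positive-dimensional family of integrable $G$-connections, and there is no reason the members of this family away from the central fibre have nilpotent $p$-curvature at all, let alone nilpotent of exponent $\leq p-1$; so $C_{\rm exp}$ simply does not apply to the family. Your proposed fix is backwards: the fact that nilpotency of the $p$-curvature is a \emph{closed} condition means precisely that it can fail on every punctured neighborhood of a point where it holds, so it does not propagate from the central fibre to the family. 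Moreover your choice of $N$ (Coxeter number plus the bound from \cite{SSW24b}) only controls the exponent of individual nilpotent elements over a field; it says nothing about whether deformed $p$-curvatures over an Artinian base remain nilpotent, which is the actual issue.

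The paper closes this gap with a quantitative argument through the spectral data morphism. Let $D$ bound the degree of the finite morphisms $\cc{M}^{\rm rig}_{*}(X_S/S)\to S$; this bounds the order of any infinitesimal deformation of the rigid Higgs bundle $(V'_s,\theta'_s)$. A putative positive-dimensional deformation $(E_T,\nabla_T)$ of the connection induces, via the $p$-spectral data morphism, a map $\chi_{\rm dR}\colon T\to\mathscr{B}(X'_s,G)$, and after restricting to infinitesimal neighborhoods $T^{(n)}$ of the base point this map factors through a nilpotent thickening $\mathscr{B}'^{(m)}$ of the origin. The integer $N$ is then taken from the Esnault--Groechenig ``OV-admissibility'' bound (depending on $D$ and on a faithful representation $G\hookrightarrow{\rm SL}_n$, via the commutative square relating ${\rm sd}$ to the Hitchin morphism), which guarantees that for $p>N$ every object lying over such a thickened point is $[p]$-nilpotent. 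Only then can $C_{\rm exp}$ be applied to the (restricted) family, and the resulting family of Higgs bundles deforms $(V'_s,\theta'_s)$ to strictly positive order $m>D$, contradicting the degree bound. Without this control of the order of the spectral data --- which is the real content of the proposition and the reason $N$ depends on $(X,S,G)$ through $D$ --- the ``formal consequence of the equivalence of categories'' you appeal to is not available.
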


\begin{proof}
The rigidity is defined on stable objects and the equivalence of stability conditions follows from Lemma \ref{lem_frobenius_pullback_rigid} and Theorem \ref{thm_main_SSW24}. We only have to check the rigidity.

Let $D$ be an integer bigger than the degree of finite morphisms $\mathcal{M}_{\ast}^{\rm rig}(X_S/S)\to S$, where $\ast= {\rm Dol} \text{ or } {\rm dR}$. Given $s \in S$, let $(V_s,\theta_s)$ be a rigid $G$-Higgs bundle on $X_s$. Then for any deformation $(V_B,\theta_B)$ over an Artinian base $B$, it corresponds to an element \begin{align*}
    \chi_{\rm Dol}: {\rm Spec} \, B \rightarrow \mathscr{A}'
\end{align*}
where $\mathscr{A}':=\mathscr{A}(X',G)$, and furthermore, it factors through $\mathscr{A}'^{(D)}$, the nilpotent thickening of $0$ of order $D$.

In the case of ${\rm SL}_n$, Esnault--Groechenig showed that there exists an integer $N'$ (depending on $D$ and $n$) such that when $ p> N'$, (twisted) ${\rm SL}_n$-Higgs bundles in the preimage of any scheme theoretic point $T \rightarrow \mathscr{A}^{(D)}(X',{\rm SL}_n)$ under the Hitchin morphism are $[p]$-nilpotent. This property is called OV-admissible in \cite{EG20}. For reductive groups, there is a similar result. We fix a faithful representation $G \hookrightarrow {\rm SL}_n$ and obtain a natural commutative diagram
\begin{center}
\begin{tikzcd}
    {\rm HIG}_{p-1}(X',G) \arrow[rr] \arrow[d,"h"] & & {\rm HIG}_{p-1}(X',{\rm SL}_n) \arrow[d,"h"]  \\
    \mathscr{A}(X',G) \arrow[rr] & & \mathscr{A}(X',{\rm SL}_n) \, ,
\end{tikzcd}
\end{center}
where $h: {\rm HIG}_{p-1}(X',{\rm SL}_n) \rightarrow \mathscr{A}(X', {\rm SL}_n)$ is the Hitchin morphism. Now we consider the nilpotent thickenings
\begin{align*}
    \mathscr{A}'^{(D)} =  \mathscr{A}^{(D)}(X',G) \rightarrow \mathscr{A}^{(D)}(X',{\rm SL}_n).
\end{align*}
Thus, any scheme theoretic point $T \rightarrow {\mathscr{A}'} ^{(D)}$ will be mapped to scheme theoretic point $T \rightarrow \mathscr{A}^{(D)}(X', {\rm SL}_n)$. Then any $G$-Higgs bundle in the preimage of $T \rightarrow {\mathscr{A}'} ^{(D)}$ under the Hitchin morphism $h$ will be sent to a ${\rm SL}_n$-Higgs bundle included in the preimage of the point $T \rightarrow \mathscr{A}^{(D)}(X', {\rm SL}_n)$. Now let $N:=N'$. When $p > N$, $G$-Higgs bundles in the preimage of any scheme theoretic point $T \rightarrow {\mathscr{A}'} ^{(D)}$ under the Hitchin morphism are $[p]$-nilpotent.

Now we choose $m>D$ and assume $p>N$, where $N$ is given as above. If $(E_s,\nabla_s):=C^{-1}(V'_s,\theta'_s)$ is not rigid, then there exists positive dimensional deformation $(E_T,\nabla_T)$ over $(X_s)_T$. Assume that either
\begin{enumerate}
    \item $\chi_{\rm dR}\colon T\to \mathscr{A}'$ factors through $\mathscr{A}'^{(m)}$; or
    \item $\chi_{\rm dR}\colon T\to \mathscr{A}'$ does not factor through $\mathscr{A}'^{(m)}$.
\end{enumerate}

In the first case, there exists a $T$-family of $G$-Higgs bundles $(V_T,\theta_T)$ such that
$$C^{-1}(V_T,\theta_T)=(E_T,\nabla_T).$$
Since $(V'_s,\theta'_s)$ is rigid by Lemma \ref{lem_frobenius_pullback_rigid}, $(V_T,\theta_T)$ is an infinitesimal deformation of order $\leq D$, which contradicts the condition that $\chi_{\rm dR}$ factors through $\mathscr{A}'^{(m)}$.

In the latter case, let $T^{(n)}$ be the $n$-th order neighborhood of $t\in T$ corresponding to $(E_s,\nabla_s)$. We have an induced family
$$(E_{T^{(n)}},\nabla_{T^{(n)}})$$
over $(X_s)_{ T^{(n)}}$, which also induces
$$\chi_{\rm dR}\colon T^{(n)}\to \mathscr{A}'.$$
There exists $n>m$ such that $\chi_{\rm dR}$ factors through $\mathscr{A}'^{(m)}$ but not through $\mathscr{A}'^{(m-1)}$. Moreover,

Since $p>N$, there exists $T^{(n)}$-family of rigid $G$-Higgs bundles such that
\begin{align*}
    \chi_{\rm dR} = \chi_{\rm Dol} :=h ((V_{T^{(n)}} , \theta_{ T^{(n)} })).
\end{align*}
Since $\chi_{\rm Dol}$ does not factor through $\mathscr{A}'^{(k)}$ for $k<m$, we obtain a strictly order $m$ deformation of $(V_s,\theta_s)$, contradicting $m>D$.
\end{proof}

\begin{definition}
Define $n_{\rm dR}(Z,G)$ (resp. $n_{\rm Dol}(Z,G)$) to be the number of isomorphism classes of rigid integrable $G$-connections (resp. $G$-Higgs bundles) on $Z$.
\end{definition}

\begin{proof}[Proof of Theorem \ref{thm:main}]
Let $S$ be a nice model given in Proposition \ref{prop:nice-models} and take $s \in S$ such that ${\rm char} \, k(s) > N$, where $N$ is the integer given in Proposition \ref{prop_OV_corr_rigid}. The properties of nice models imply
\begin{align*}
    n_{\rm dR}(X,G)=n_{\rm dR}(X_s,G), \quad n_{\rm Dol}(X,G)=n_{\rm Dol}(X_s,G).
\end{align*}
By Lemma \ref{lem_frobenius_pullback_rigid}, we know
\begin{align*}
    n_{\rm Dol}(X_s,G)=n_{\rm Dol}(X'_s,G).
\end{align*}
Furthermore, Corlette--Simpson correspondence (Theorem \ref{thm_CS_G}) gives
\begin{align*}
    n_{\rm dR}(X,G)=n_{\rm Dol}(X,G).
\end{align*}
Combining the above equalities, we have
\begin{align*}
    n_{\rm dR}(X_s,G) = n_{\rm Dol}(X'_s,G).
\end{align*}
The only thing left to show is that rigid integrable $G$-connections on $X_s$ has nilpotent $p$-curvature, i.e.,
\begin{align*}
    n_{\rm dR}^{\rm nilp}(X_s,G) = n_{\rm dR}(X_s,G),
\end{align*}
where $n_{\rm dR}^{\rm nilp}(X_s,G)$ is the number of stable rigid integrable $G$-connections on $X_s$ with nilpotent $p$-curvature. By Lemmas~\ref{lem:rigid-higgs-nilpotent} and~\ref{lem:cartier-nilpotent}, we have
\begin{align*}
    n_{\rm dR}^{\rm nilp}(X_s,G) = n_{\rm Dol}(X_s,G).
\end{align*}
Then, the desired equality comes from the following one
\begin{align*}
    n_{\rm dR}^{\rm nilp}(X_s,G) & =  n_{\rm Dol}(X_s,G)
     = n_{\rm dR}(X_s,G)
     \geq n^{\rm nilp}_{\rm dR}(X_s,G).
\end{align*}
This finishes the proof of this theorem.
\end{proof}

\section{Comments on the higher dimensional case}\label{sect_higher_dim}

In this section, we want to make some comments on applying our approach to the higher dimensional case.

\subsection{Stability conditions and moduli spaces}
Ramanathan introduces a natural stability condition on $G$-bundles \cite{Ram75,Ram96a}, which is called $R$-stability condition in this paper. On higher dimensional varieties, the $R$-stability condition of $G$-bundles is given by the reduction of structure group $U \rightarrow (V|_U) / P$ on an open subset $U$ such that ${\rm codim}(X \backslash U) \geq 2$ \cite[Definition 1.1]{AB01}. In such higher dimensional cases, a $G$-Higgs bundle is $R$-semistable if and only if its adjoint Higgs bundle is semistable \cite[Lemma 4.7]{AB01}. For a discussion in mixed characteristics, see \cite[\S 3.2]{GLSS08}.

In the context of the Corlette--Simpson correspondence, the Higgs bundles are always of vanishing Chern classes, which implies that all Higgs subsheaves are indeed subbundles \cite[Proposition 6.6]{Sim94b}. Furthermore, the Gieseker stability condition, also called the $P$-stability condition, is equivalent to the slope stability condition \cite[Remark below Corollary 6.7]{Sim94b}. Therefore, it is enough to consider (poly)stable Higgs bundles under the slope stability condition in the Corlette--Simpson correspondence.

Since the $R$-stability condition of $G$-bundles is equivalent to the slope stability condition of the associated vector bundles, the Dolbeault moduli space $\mathcal{M}_{\rm Dol}(X,G)$ of principal objects can be regarded as the moduli space of $R$-semistable $G$-Higgs bundles on $X$ with vanishing Chern classes.

\subsection{Nonabelian Hodge correspondence for $G$-bundles in positive characteristic}

The nonabelian Hodge correspondence for $G$-bundles on higher dimensional varieties in positive characteristic is still unknown. However, in \cite{SSW24b}, the authors give such a correspondence for nilpotent objects via the approach of exponential twisting introduced in \cite{LSZ15}. We only give the statement as follows and refer the reader to \cite[\S 2 \& \S 3]{SSW24b} for more details.

\begin{theorem}[{\cite[Theorem 3.1 \& Theorem 3.10]{SSW24b}}]\label{thm_main_SSW24}
Suppose that $X$ is $W_2(k)$-liftable and the characteristic $p$ is large enough. The category of nilpotent $G$-Higgs bundles on $X$ of exponent $\leq p-1$ and the category of nilpotent integrable $G$-connections on $X$ of exponent $\leq p-1$ are equivalent. Moreover, the equivalence preserves the $R$-stability condition.
\end{theorem}

After the establishing of the nonabelian Hodge correspondence, we aim at constructing a $p$-Hitchin morphism for integrable $G$-connections on higher dimensional varieties, which is still unknown. The main obstacle is that we do not know whether the Hitchin morphism is horizontal, although it is known in the case of curves \cite{BD}. If the horizontal property of the Hitchin morphism can be proved, the proof of the main theorem can be applied directly.


\bibliography{bibtex}
\bibliographystyle{alpha}
\end{document}